\newcommand{\Hom}{\mathop{\mathrm{Hom}}\nolimits}
\newcommand{\Ker}{\mathop{\mathrm{Ker}}\nolimits}
\newcommand{\Spec}{\mathop{\mathrm{Spec}}\nolimits}
\newcommand{\Mod}{\mathop{\mathcal{M}od_k}\nolimits}
\newcommand{\ModA}{\mathop{\mathcal{M}od_k^{\mathbb{A}^1}}\nolimits}
\newcommand{\Ab}{\mathop{\mathcal{A}b_k}\nolimits}
\newcommand{\AbA}{\mathop{\mathcal{A}b_k^{\mathbb{A}^1}}\nolimits}
\newcommand{\Smk}{\mathop{\mathcal{S}m_k}\nolimits}
\newcommand{\Spck}{\mathop{\mathcal{S}pc_k}\nolimits}
\newtheorem{thm}{Theorem}[section]
\newtheorem{lem}[thm]{Lemma}
\newtheorem{prop}[thm]{Proposition}
\theoremstyle{definition}
\newtheorem{defi}[thm]{Definition}
\newtheorem{examp}[thm]{Example}
\newtheorem{rem}[thm]{Remark}
\newtheorem*{nota}{Notation}
\newtheorem*{acknow}{Acknowledgments}
\begin{document}

\title{Notes on $\mathbb{A}^1$-contractibility and $\mathbb{A}^1$-excision}
\author{Yuri Shimizu\footnote{Department of Mathematics, Tokyo Institute of Technology \newline E-mail: qingshuiyouli27@gmail.com}}
\maketitle

\begin{abstract}
We prove that a smooth scheme of dimension $n$ over a perfect field is $\mathbb{A}^1$-weakly equivalent to a point if it is $\mathbb{A}^1$-$n$-connected. We also prove an excision result for $\mathbb{A}^1$-homotopy sheaves over a perfect field. 
\end{abstract}



\section{Introduction}

In this paper, we discuss $n$-connectedness of schemes over a field $k$ in the sense of $\mathbb{A}^1$-homotopy theory. Morel-Voevodsky \cite{MV} defines an $\mathbb{A}^1$-homotopy version of homotopy groups or sets as sheaves over the large Nisnevich site on the category of smooth $k$-schemes $\Smk$. They are called the $\mathbb{A}^1$-homotopy sheaves and denoted by $\pi_i^{\mathbb{A}^1}(X)$ for $X \in \Smk$. A scheme $X$ is called $\mathbb{A}^1$-$n$-connected if $\pi_i^{\mathbb{A}^1}(X)$ is isomorphic to the constant sheaf valued on a point for all $0 \leq i \leq n$. $\mathbb{A}^1$-$0$-connected schemes are simply called $\mathbb{A}^1$-connected. 

In $\mathbb{A}^1$-homotopy theory, schemes are considered up to $\mathbb{A}^1$-weak equivalences. We say that a scheme over $k$ is $\mathbb{A}^1$-contractible if it is $\mathbb{A}^1$-weak equivalence to $\Spec k$. Morel-Voevodsky \cite{MV} proved the Whitehead theorem for $\mathbb{A}^1$-homotopy, which says that a scheme is $\mathbb{A}^1$-contractible if it is $\mathbb{A}^1$-$n$-connected for \textit{all} non-negative integer $n$. Our first result is an improvement of this theorem, showing that the range of $n$ can be actually restricted to $0 \leq n \leq \dim X$. More precisely, we prove the following.

\begin{thm}[see Theorem \ref{main1}]\label{int-main1}
Let $k$ be a perfect field and $X$ a pointed smooth $k$-scheme of dimension $n \geq 1$. If $X$ is $\mathbb{A}^1$-$n$-connected, then it is $\mathbb{A}^1$-contractible. 
\end{thm}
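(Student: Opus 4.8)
The plan is to reduce the statement to the Morel--Voevodsky Whitehead theorem recalled in the introduction, which says that $X$ is $\mathbb{A}^1$-contractible as soon as it is $\mathbb{A}^1$-$m$-connected for \emph{every} non-negative integer $m$. Thus it suffices to promote the hypothesis of $\mathbb{A}^1$-$n$-connectedness (with $n=\dim X$) to $\mathbb{A}^1$-$m$-connectedness for all $m$. Since $\pi_i^{\mathbb{A}^1}(X)$ need not be bounded in $i$ even for low-dimensional schemes (as the example of the motivic spheres $\mathbb{A}^n \setminus 0$ already suggests), one cannot argue with homotopy sheaves directly; instead I would pass to the reduced $\mathbb{A}^1$-homology sheaves $\tilde H_i^{\mathbb{A}^1}(X)$, which \emph{are} bounded in terms of $\dim X$, and use Morel's $\mathbb{A}^1$-Hurewicz theorem as the bridge. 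Because $n \geq 1$, the scheme $X$ is $\mathbb{A}^1$-simply connected, so Hurewicz is available in its strong form: whenever $X$ is $\mathbb{A}^1$-$m$-connected with $m \geq 1$, one has $\tilde H_i^{\mathbb{A}^1}(X)=0$ for $i \leq m$ together with a canonical isomorphism $\pi_{m+1}^{\mathbb{A}^1}(X) \cong \tilde H_{m+1}^{\mathbb{A}^1}(X)$.

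The key input, which I would isolate as a separate lemma, is the dimension bound
\[
\tilde H_i^{\mathbb{A}^1}(X)=0 \qquad \text{for all } i > \dim X .
\]
Granting it, the theorem follows by a short induction. The base case is the hypothesis: $X$ is $\mathbb{A}^1$-$n$-connected, so Hurewicz gives $\pi_{n+1}^{\mathbb{A}^1}(X) \cong \tilde H_{n+1}^{\mathbb{A}^1}(X)$, which vanishes because $n+1 > n = \dim X$; hence $X$ is $\mathbb{A}^1$-$(n+1)$-connected. Assuming $X$ is $\mathbb{A}^1$-$m$-connected for some $m \geq n$, the same argument gives $\pi_{m+1}^{\mathbb{A}^1}(X) \cong \tilde H_{m+1}^{\mathbb{A}^1}(X) = 0$, so $X$ is $\mathbb{A}^1$-$(m+1)$-connected. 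Therefore $X$ is $\mathbb{A}^1$-$m$-connected for all $m$, and the Whitehead theorem yields $\mathbb{A}^1$-contractibility.

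The main obstacle is thus the dimension bound on $\mathbb{A}^1$-homology, and this is exactly where perfectness of $k$ and the excision results of this paper should be used. I would attack it through the coniveau (Gersten) filtration of $X$ by codimension of supports, which produces a spectral sequence computing $\tilde H_*^{\mathbb{A}^1}(X)$ from contributions concentrated at the points of $X$: since $X$ has dimension $n$, only codimensions $0,\dots,n$ occur, and a point of codimension $c$ contributes, via $\mathbb{A}^1$-excision, the homology of the Thom space of a rank-$c$ normal bundle over an $(n-c)$-dimensional base, hence nothing above total degree $(n-c)+c=n$. Turning this heuristic into a proof requires the excision/localization property for $\mathbb{A}^1$-homotopy sheaves to identify the graded pieces, together with Morel's structure theory over a perfect field: the $\mathbb{A}^1$-homology sheaves are strictly $\mathbb{A}^1$-invariant and admit Gersten--Rost--Schmid resolutions, so that their Nisnevich cohomological dimension is bounded by the Krull dimension. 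The delicate points will be the convergence and indexing of the spectral sequence and the careful tracking of the $\mathbb{G}_m$-twists introduced by the Thom isomorphisms, so that the degree bookkeeping closes up cleanly.
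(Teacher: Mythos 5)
Your overall skeleton is exactly the paper's: reduce via the Morel--Voevodsky Whitehead theorem to showing that $\mathbb{A}^1$-$m$-connectedness for $m \geq n$ propagates to $\mathbb{A}^1$-$(m+1)$-connectedness, and use Morel's $\mathbb{A}^1$-Hurewicz theorem \cite[Thm. 5.37]{Mo2} to convert the question into vanishing of an $\mathbb{A}^1$-homology sheaf. The genuine gap is your ``key lemma,'' the unconditional bound $\tilde{\mathbf{H}}_i^{\mathbb{A}^1}(X)=0$ for all $i>\dim X$. This is not a lemma one can establish by the sketched coniveau argument: it is (essentially) Morel's vanishing conjecture for $\mathbb{A}^1$-homology, open even for smooth \emph{affine} $X$; moreover, for general smooth $X$ the conjectured sharp bound is $2\dim X$ rather than $\dim X$ (a Mayer--Vietoris dévissage over an affine cover can raise the simplicial degree beyond $\dim X$), so the statement you need is stronger than what is even conjectured in the non-affine case, and Theorem \ref{main1} is not restricted to affine schemes. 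Your spectral-sequence sketch breaks at an identifiable point: Gersten/Rost--Schmid resolutions bound the Nisnevich cohomology \emph{of a fixed strictly $\mathbb{A}^1$-invariant sheaf}, but there is no known coniveau spectral sequence converging to $\mathbf{H}_*^{\mathbb{A}^1}(X)$ itself, because the Gysin step requires a Thom isomorphism $\tilde{\mathbf{H}}_i^{\mathbb{A}^1}(\mathrm{Th}(N)) \cong \mathbf{H}_{i-c}^{\mathbb{A}^1}(\text{stratum, twisted})$, and $\mathbb{A}^1$-homology is unstable in the $\mathbb{G}_m$-direction: the effect of $\mathbb{G}_m$-suspension on $\mathbf{H}_*^{\mathbb{A}^1}$ is precisely what is not understood (this is why the conjecture is open). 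The non-smoothness of coniveau strata is a further, though repairable, issue. Note also that the excision results of this paper play no role in Theorem \ref{main1}; perfectness enters only through Morel's theorems.

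The paper sidesteps the conjecture by proving the needed vanishing \emph{conditionally and dually}. Its Lemma \ref{key} shows that once $\tilde{\mathbf{H}}_i^{\mathbb{A}^1}(X)=0$ for $i \leq m$ (which the Hurewicz theorem supplies from $\mathbb{A}^1$-$m$-connectedness), there is a natural isomorphism
\begin{equation*}
\Hom_{\AbA}\bigl(\mathbf{H}_{m+1}^{\mathbb{A}^1}(X),M\bigr) \cong H^{m+1}_{Nis}(X,M)
\end{equation*}
for every strictly $\mathbb{A}^1$-invariant sheaf $M$, proved by representing Nisnevich cohomology via Eilenberg--MacLane spaces, replacing $C(X,\mathbb{Z})$ by its $\mathbb{A}^1$-localization, and truncating. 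The dimension input is then Nisnevich's theorem \cite[Thm. 1.32]{Nis} that $H^{i}_{Nis}(X,M)=0$ for $i>\dim X$ --- an unconditional statement about the Nisnevich site, valid for arbitrary coefficients, requiring no Gersten resolutions or Thom isomorphisms. Hence $\Hom_{\AbA}(\pi_{m+1}^{\mathbb{A}^1}(X),M)=0$ for all $M \in \AbA$, and since $\pi_{m+1}^{\mathbb{A}^1}(X) \cong \mathbf{H}_{m+1}^{\mathbb{A}^1}(X)$ is itself strictly $\mathbb{A}^1$-invariant over a perfect field \cite[Cor. 5.2]{Mo2}, Yoneda's lemma \emph{inside the subcategory} $\AbA$ forces $\pi_{m+1}^{\mathbb{A}^1}(X)=0$. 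If you replace your key lemma by this cohomological-dimension-plus-Yoneda argument, your induction closes; as written, the proposal rests on an open conjecture.
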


The proof of Theorem \ref{int-main1} is based on Morel's theory of $\mathbb{A}^1$-homology sheaves \cite{Mo2}, which is an $\mathbb{A}^1$-version of singular homology. We give a relation between the $\mathbb{A}^1$-homology and the Nisnevich cohomology, generalizing Asok's argument \cite{As} from degree $0$ to all degrees. By combining this with the vanishing result for Nisnevich cohomology and using Morel's $\mathbb{A}^1$-Hurewicz theorem \cite{Mo2}, we obtain Theorem \ref{int-main1}.




Our second result concerns excision of $\mathbb{A}^1$-homotopy sheaves. Let $X$ be a pointed smooth scheme over a field $k$ and $U$ a pointed open subscheme of $X$ whose complement has codimension $d \geq 2$. Asok-Doran \cite{AD} proved, for $k$ infinite field, that if $X$ is $\mathbb{A}^1$-$(d-3)$-connected, then the canonical morphism
\begin{equation*}
\pi_{l}^{\mathbb{A}^1}(U) \to \pi_{l}^{\mathbb{A}^1}(X)
\end{equation*}
is an isomorphism for $0 \leq l \leq d-2$ and an epimorphism for $l=d-1$. Our second result is a weaker version of this theorem over a perfect field. This covers the finite field case which is not treated in Asok-Doran's result. We need, however, an extra assumption that first $\mathbb{A}^1$-homotopy sheaves are abelian. To be precise, we prove the following.


\begin{thm}[see Theorem \ref{main2}]\label{int-excision}
Let $k$ be a perfect field, $U$ a pointed open subscheme of a pointed smooth $k$-scheme $X$ whose complement has codimension $d \geq 2$, and $l$ a non-negative integer. Assume that $U$ and $X$ are $\mathbb{A}^1$-connected and that $\pi_{1}^{\mathbb{A}^1}(U)$ and $\pi_{1}^{\mathbb{A}^1}(X)$ are abelian. If $X$ is $\mathbb{A}^1$-$(l-1)$-connected, then the canonical morphism
\begin{equation*}
\pi_{l}^{\mathbb{A}^1}(U) \to \pi_{l}^{\mathbb{A}^1}(X)
\end{equation*}
is an isomorphism when $0 \leq l \leq d-2$ and an epimorphism when $l=d-1$.
\end{thm}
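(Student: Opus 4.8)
The plan is to reduce the assertion about $\mathbb{A}^1$-homotopy sheaves to the corresponding assertion about Morel's $\mathbb{A}^1$-homology sheaves, where a long exact sequence is available for the cofiber, and then to transport the conclusion back through the $\mathbb{A}^1$-Hurewicz theorem. I argue by induction on $l$. The case $l=0$ is immediate, since $U$ and $X$ are assumed $\mathbb{A}^1$-connected, so both $\pi_0^{\mathbb{A}^1}$ are trivial and the map is an isomorphism (note $0 \le d-2$ as $d \ge 2$). For the inductive step at level $l$ with $1 \le l \le d-1$, I first record that $U$ is itself $\mathbb{A}^1$-$(l-1)$-connected: for $1 \le i \le l-1$ we have $i \le d-2$, and applying the already-established level-$i$ isomorphism — valid because $X$, being $\mathbb{A}^1$-$(l-1)$-connected, is a fortiori $\mathbb{A}^1$-$(i-1)$-connected — identifies $\pi_i^{\mathbb{A}^1}(U)$ with $\pi_i^{\mathbb{A}^1}(X) = \ast$. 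Thus both $U$ and $X$ enter the Hurewicz range at degree $l$.

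The geometric input is the connectivity of the cofiber $X/U$: I claim its reduced $\mathbb{A}^1$-homology satisfies $\tilde H_j^{\mathbb{A}^1}(X/U) = 0$ for all $j \le d-1$. Since $k$ is perfect, the reduced complement $Z = X \setminus U$ admits a finite filtration by closed subschemes whose successive differences are smooth and of codimension at least $d$ in $X$ (generic smoothness). The Morel--Voevodsky homotopy purity theorem then presents each subquotient of a corresponding filtration $U = U_0 \subset U_1 \subset \cdots \subset U_m = X$ by open subschemes as the Thom space of a vector bundle of rank at least $d$ over a smooth base, and such a Thom space is $\mathbb{A}^1$-$(d-1)$-connected, so its reduced $\mathbb{A}^1$-homology vanishes below degree $d$. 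An induction along the filtration, using the cofiber sequences $U_{i-1}/U \to U_i/U \to U_i/U_{i-1}$ and the long exact homology sequences they induce, yields the claimed vanishing for $X/U$.

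With this in hand, the open immersion $U \hookrightarrow X$ fits into a cofiber sequence $U \to X \to X/U$, which Morel's $\mathbb{A}^1$-homology turns into a long exact sequence
\begin{equation*}
\cdots \to \tilde H_{l}^{\mathbb{A}^1}(U) \to \tilde H_{l}^{\mathbb{A}^1}(X) \to \tilde H_{l}^{\mathbb{A}^1}(X/U) \to \tilde H_{l-1}^{\mathbb{A}^1}(U) \to \cdots .
\end{equation*}
Because $\tilde H_j^{\mathbb{A}^1}(X/U)$ vanishes for $j \le d-1$, the map $\tilde H_l^{\mathbb{A}^1}(U) \to \tilde H_l^{\mathbb{A}^1}(X)$ is an isomorphism for $l \le d-2$ and an epimorphism for $l = d-1$. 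To return to homotopy sheaves I invoke the $\mathbb{A}^1$-Hurewicz theorem for the $\mathbb{A}^1$-$(l-1)$-connected spaces $U$ and $X$: for $l \ge 2$ it gives natural isomorphisms $\pi_l^{\mathbb{A}^1} \cong \tilde H_l^{\mathbb{A}^1}$, while for $l = 1$ the Hurewicz map realizes $\tilde H_1^{\mathbb{A}^1}$ as the abelianization of $\pi_1^{\mathbb{A}^1}$ — here the hypothesis that $\pi_1^{\mathbb{A}^1}(U)$ and $\pi_1^{\mathbb{A}^1}(X)$ are abelian is precisely what upgrades this to an isomorphism. By naturality of the Hurewicz map the square comparing the two horizontal maps induced by $U \hookrightarrow X$ commutes, so the homotopy map inherits the isomorphism (resp. epimorphism) property from the homology map, completing the inductive step.

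The main obstacle I anticipate is the second step: controlling the connectivity of $X/U$ when $Z$ is singular. The stratification must be arranged so that homotopy purity applies to each locally closed smooth stratum inside the appropriate open subscheme, and one must check that every stratum has codimension at least $d$, so that each Thom space contributes nothing to $\mathbb{A}^1$-homology below degree $d$; this is exactly where perfectness of $k$, via the existence of a smooth stratification, is essential. A secondary technical point is the exactness and naturality of Morel's $\mathbb{A}^1$-homology on cofiber sequences together with the precise degree-one form of the $\mathbb{A}^1$-Hurewicz theorem, which is what forces the abelianness hypothesis into the statement.
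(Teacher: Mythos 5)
Your proposal is correct in substance, and its overall skeleton matches the paper's: induction on $l$, with the $\mathbb{A}^1$-Hurewicz theorem transferring an excision statement for $\mathbb{A}^1$-homology sheaves back to homotopy sheaves, and the abelianness of $\pi_1^{\mathbb{A}^1}$ entering exactly at $l=1$. But the key homological input is established by a genuinely different route. You prove the vanishing $\tilde{\mathbf{H}}_j^{\mathbb{A}^1}(X/U)=0$ for $j \leq d-1$ geometrically, via a smooth stratification of $Z = X \setminus U$ (generic smoothness over the perfect field), Morel--Voevodsky homotopy purity presenting each subquotient as a Thom space of a rank $\geq d$ bundle, and the long exact sequence of the cofiber sequence $U \to X \to X/U$; this is essentially Asok--Doran's argument from \cite{AD} transported to homology. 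The paper instead proves Lemma \ref{Hcodim} by a dual, cohomological mechanism with no cofiber sequence at all: Lemma \ref{key} identifies $H^{l+1}_{Nis}(-,M)$ with $\Hom_{\ModA(R)}(\mathbf{H}_{l+1}^{\mathbb{A}^1}(-,R),M)$ under the inductive vanishing hypothesis, Morel's Gersten-resolution purity \cite[Lem. 6.4.4]{Mo1} gives the isomorphism (resp. monomorphism) on Nisnevich cohomology upon restriction to $U$, and Yoneda's lemma in $\ModA(R)$ converts this back into the isomorphism (resp. epimorphism) on homology. The trade-off: your route is geometrically transparent, but the step ``a Thom space of a rank $\geq d$ bundle has vanishing reduced $\mathbb{A}^1$-homology below degree $d$'' is not formal --- the Thom space, as a quotient sheaf, is simplicially discrete, and the claim requires Morel's $\mathbb{A}^1$-(homological) connectivity theorem, available in the cited literature only over a perfect (or infinite) field; so perfectness enters your argument there as well, not only through the stratification as you assert, and your homology lemma is consequently perfect-field-bound, whereas the paper's Lemma \ref{Hcodim} holds over an arbitrary field with perfectness confined to the Hurewicz step --- which is exactly how the paper sidesteps Asok--Doran's hypotheses. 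One further imprecision, harmless here: $\mathbf{H}_1^{\mathbb{A}^1}$ is not literally the abelianization of $\pi_1^{\mathbb{A}^1}$ but its universal strictly $\mathbb{A}^1$-invariant abelian quotient (the $\mathbb{A}^1$-abelianization of Section \ref{adj-inc}), so abelianness alone does not upgrade the Hurewicz map to an isomorphism; you must also invoke Morel's theorem that a strongly $\mathbb{A}^1$-invariant sheaf of abelian groups over a perfect field is strictly $\mathbb{A}^1$-invariant, which is why the paper cites \cite[Thm. 5.35 and 4.46]{Mo2} together.
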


For the proof of this theorem, we first establish an $\mathbb{A}^1$-excision theorem for $\mathbb{A}^1$-homology sheaves over an arbitrary field, which is proved for degree $0$ by Asok \cite{As}. Theorem \ref{int-excision} is a homotopy version of this via the $\mathbb{A}^1$-Hurewicz theorem \cite{Mo2}.


The paper is organized as follows. In Section \ref{recoll}, we recall basic facts on $\mathbb{A}^1$-homotopy theory. In Section \ref{dim}, we prove Theorem \ref{int-main1}. In Section \ref{codim2}, we prove \ref{int-excision}. In Section \ref{adj-inc}, we give a refinement of Morel's $\mathbb{A}^1$-Hurewicz theorem \cite{Mo2}. This proves a weaker version of Theorem \ref{int-main1} for not perfect field $k$ (see Theorem \ref{Finalcor}).

\begin{nota}
Throughout this paper, we fix a field $k$ and a commutative unital ring $R$. If we say $k$-{\it scheme}, it means a separated $k$-scheme of finite type. We denote $\Smk$ for the category of smooth $k$-schemes. In this paper, every sheaf is considered on the large Nisnevich site over $\Smk$.
\end{nota}

\begin{acknow}
I would like to thank my adviser Shohei Ma for many useful advices. I would also like to thank Aravind Asok for helpful comments.
\end{acknow}


\section{Recollections of unstable $\mathbb{A}^1$-homotopy theory}\label{recoll}

In this section, we recall some basic facts on unstable $\mathbb{A}^1$-homotopy theory following \cite{MV}, \cite{Mo2}, \cite{CD12}, and \cite{As}. $\mathbb{A}^1$-homotopy theory is constructed by using simplicial sets. We refer to \cite{GJ} for the homotopy theory of simplicial sets. Let $\Spck$ be the category of simplicial Nisnevich sheaves of sets. An object of $\Spck$ is called a $k${\it-space}. Via the Yoneda embedding $\Smk \to \Spck$, smooth $k$-schemes are naturally regarded as $k$-spaces.

\subsection{$\mathbb{A}^1$-model structure}\label{BMS}

Morel-Voevodsky \cite{MV} constructed two model structures for $\Spck$. The first one is the simplicial model structure, which is the Nisnevich sheafification of the homotopy theory of simplicial sets. A morphism of $k$-spaces $f:\mathcal{X} \to \mathcal{Y}$ is called a \textit{simplicial weak equivalence} if for an arbitrary point $x$ of every $X \in \Smk$ the induced morphism $\mathcal{X}_x \to \mathcal{Y}_x$ of stalks is a weak equivalence of simplicial sets.

\begin{thm}[Morel-Voevodsky {\cite[Thm. 1.4.]{MV}}]\label{smodel}
The category $\Spck$ has a model structure in which weak equivalences are simplicial weak equivalences and cofibrations are monomorphisms.
\end{thm}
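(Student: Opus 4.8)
The plan is to verify the closed model category axioms for $\Spck$ with cofibrations the monomorphisms, weak equivalences the simplicial weak equivalences, and fibrations defined by the right lifting property against trivial cofibrations, following the Joyal--Jardine treatment of simplicial sheaves specialized to the Nisnevich topology. Structurally, the Nisnevich sheaves of sets form a Grothendieck topos, so $\Spck$ is complete, cocomplete and locally presentable; the limit axiom is then automatic, and since the Nisnevich topology has enough points---the Nisnevich (henselian) local rings---a morphism is a simplicial weak equivalence exactly when each of its stalks is a weak equivalence of simplicial sets. The $2$-out-of-$3$ and retract axioms for weak equivalences are immediate, as both can be tested stalkwise and there reduce to the known facts for simplicial sets.

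The technical heart is the factorization axiom, for which I would establish the bounded cofibration property. Fix an infinite cardinal $\kappa$ strictly larger than the cardinality of the set of morphisms of $\Smk$. The assertion is that if $i\colon \mathcal{A} \to \mathcal{B}$ is a trivial cofibration and $\mathcal{C} \subseteq \mathcal{B}$ is any $\kappa$-bounded subobject, then there is a $\kappa$-bounded subobject $\mathcal{D}$ with $\mathcal{C} \subseteq \mathcal{D} \subseteq \mathcal{B}$ for which $\mathcal{D} \cap \mathcal{A} \to \mathcal{D}$ is again a trivial cofibration. This is the main obstacle: because Nisnevich sheafification can enlarge cardinalities, the argument must interleave the set-theoretic bound with the stalkwise criterion for weak equivalences, and making that interplay work is where the real content lies. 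Granting it, the trivial cofibrations form the saturation of the \emph{set} of $\kappa$-bounded trivial cofibrations, so Quillen's small object argument produces the factorization of any morphism into a trivial cofibration followed by a fibration.

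For the remaining factorization, into a cofibration followed by a trivial fibration, I would run the small object argument on a set of generating monomorphisms---which exists because the topos is locally presentable, the basic examples being the boundary inclusions $\partial\Delta^n \times U \to \Delta^n \times U$ for representable $U \in \Smk$---and then identify the resulting right class, the maps with the right lifting property against every monomorphism, with the fibrations that are also weak equivalences. The one nonformal implication, that a fibration which is a weak equivalence lifts against all monomorphisms, follows by factoring it as a trivial cofibration followed by a fibration and splitting off a section by a retract argument, again using that weak equivalences are detected on stalks. This settles the lifting and factorization axioms, completing the verification and yielding the model structure; the construction is precisely Morel--Voevodsky's adaptation of Joyal--Jardine to the Nisnevich site.
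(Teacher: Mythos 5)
The paper does not prove this statement at all---it is recalled background, cited to \cite{MV} (Thm.~1.4), whose proof is the Joyal--Jardine construction of the injective local model structure on simplicial sheaves. Your outline follows exactly that route (stalkwise weak equivalences via the enough-points property of the Nisnevich site, bounded cofibration property, small object argument), so the architecture is the right one. But as a proof it has a genuine gap, which you yourself flag: the bounded cofibration property is stated and then explicitly ``granted.'' Since every other step is formal machinery, that lemma \emph{is} the mathematical content of the theorem, and its proof is not mere cardinality bookkeeping. One must use that stalks, and relative homotopy groups of stalks, commute with the filtered union $\mathcal{B}=\bigcup_i\mathcal{B}_i$ of $\kappa$-bounded subobjects: each of the at most $\kappa$ obstruction classes carried by a bounded $\mathcal{C}_n$ (elements of relative homotopy groups of stalks of the pair $(\mathcal{C}_n,\mathcal{C}_n\cap\mathcal{A})$, all of which vanish in $(\mathcal{B},\mathcal{A})$ because $i$ is a trivial cofibration) already dies in some $\kappa$-bounded enlargement $\mathcal{C}_{n+1}$, and $\mathcal{D}=\bigcup_n\mathcal{C}_n$ then works since the colimit kills all relative homotopy. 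You correctly identify where the difficulty sits, but without this argument the proposal is an outline of Jardine's proof rather than a proof. (Relatedly, your ``saturation'' claim also silently uses that trivial cofibrations are closed under pushout and transfinite composition, checked stalkwise.)

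There is a second, concrete error in the cofibration/trivial-fibration factorization: the boundary inclusions $\partial\Delta^n\times U\to\Delta^n\times U$ for $U\in\Smk$ do \emph{not} generate the monomorphisms of $\Spck$. Unlike in simplicial sets, not every monomorphism of simplicial Nisnevich sheaves is cellularly built from them; their saturation is the class of projective cofibrations, and the right class they determine consists of sectionwise trivial fibrations. Running the small object argument on that set factors a map as a projective cofibration followed by a sectionwise trivial fibration, and the second factor need not lift against arbitrary monomorphisms---so the factorization axiom for the injective structure fails, and your retract step breaks with it: to split off the section it is not enough that the second factor be a fibration and a weak equivalence; it must actually have the right lifting property against \emph{all} monomorphisms, so that the retract exhibits the given trivial fibration as a retract of a map already in the right class. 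The repair, as in Jardine, is to take as generating cofibrations the set of all $\kappa$-bounded monomorphisms: in a Grothendieck topos every monomorphism is a transfinite composition of pushouts of bounded ones (subobject exhaustion), maps with the lifting property against these are stalkwise trivial fibrations and hence weak equivalences, and then the retract argument closes exactly as you describe.
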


This model structure is called the \textit{simplicial model structure}. We denote $\mathcal{H}_s(k)$ for its homotopy category and write $[\mathcal{X},\mathcal{Y}]_s = \Hom_{\mathcal{H}_s(k)}(\mathcal{X},\mathcal{Y})$ for $k$-spaces $\mathcal{X}$ and $\mathcal{Y}$.

Next, $\mathbb{A}^1$-model structure for $\Spck$ is constructed via the simplicial model structure as follows. A $k$-space $\mathcal{X}$ is called $\mathbb{A}^1$-\textit{local} if for every $k$-spaces $\mathcal{Y}$ the natural map $[\mathcal{Y},\mathcal{X}]_s \to [\mathcal{Y} \times \mathbb{A}^1,\mathcal{X}]_s$ is bijective. Moreover, a morphism of $k$-spaces $f:\mathcal{X} \to \mathcal{Y}$ is called an $\mathbb{A}^1$-\textit{weak equivalence} if for every $\mathbb{A}^1$-local $k$-space $\mathcal{Z}$ the induced map $[\mathcal{Y},\mathcal{Z}]_s \to [\mathcal{X},\mathcal{Z}]_s$ is bijective.

\begin{thm}[Morel-Voevodsky {\cite[Thm. 3.2.]{MV}}]\label{A1model}
The category $\Spck$ has a model structure in which weak equivalences are $\mathbb{A}^1$-weak equivalences and cofibrations are monomorphisms.
\end{thm}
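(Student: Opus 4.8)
The plan is to obtain the $\mathbb{A}^1$-model structure as the left Bousfield localization of the simplicial model structure of Theorem \ref{smodel} at the set of $\mathbb{A}^1$-projections $S = \{\,\mathbb{A}^1 \times U \to U : U \in \Smk\,\}$. Once the general localization machinery is shown to apply it produces a model structure on $\Spck$ automatically, so the substance of the argument is to verify the input hypotheses and then to identify the abstractly-defined local equivalences and local objects with the $\mathbb{A}^1$-weak equivalences and $\mathbb{A}^1$-local $k$-spaces of the definitions above.

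First I would check that the simplicial model structure is localizable. Because its cofibrations are exactly the monomorphisms, the map $\emptyset \to \mathcal{X}$ is a cofibration for every $k$-space $\mathcal{X}$, so every object is cofibrant and the model structure is therefore left proper. Moreover $\Spck$ is a category of simplicial sheaves on an (essentially) small site, hence locally presentable, and the simplicial model structure is cofibrantly generated and simplicially enriched; thus it is left proper and combinatorial. These are precisely the hypotheses under which left Bousfield localizations are known to exist. I would then invoke the existence theorem for left Bousfield localization at $S$, which yields a model structure $L_S\Spck$ on the same underlying category whose cofibrations agree with those of the simplicial structure — so they are again the monomorphisms, as the statement demands — whose fibrant objects are the $S$-local objects, and whose weak equivalences are the $S$-local equivalences, namely the maps $f$ for which $\mathrm{Map}(f,\mathcal{Z})$ is a simplicial weak equivalence for every $S$-local $\mathcal{Z}$.

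The main remaining task, and where I expect the real work to lie, is to reconcile this abstract output with the paper's definitions. I would prove that a simplicially fibrant $\mathcal{Z}$ is $S$-local precisely when it is $\mathbb{A}^1$-local: the simplicial enrichment gives $\pi_0\mathrm{Map}(\mathcal{Y},\mathcal{Z}) = [\mathcal{Y},\mathcal{Z}]_s$, and the higher homotopy of these mapping spaces is likewise detected by the sets $[-,-]_s$, so that the condition that $\mathrm{Map}(U,\mathcal{Z}) \to \mathrm{Map}(\mathbb{A}^1\times U,\mathcal{Z})$ be a weak equivalence for all representable $U$ translates into the bijectivity of $[\mathcal{Y},\mathcal{Z}]_s \to [\mathcal{Y}\times\mathbb{A}^1,\mathcal{Z}]_s$ for all $\mathcal{Y}$. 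Here one uses that every $\mathcal{Y}$ is a colimit of representables and that cofibrations are monomorphisms to pass from the representable generators to arbitrary $\mathcal{Y}$. Granting this identification, $S$-locality coincides with $\mathbb{A}^1$-locality, hence the $S$-local equivalences are exactly the $\mathbb{A}^1$-weak equivalences, and $L_S\Spck$ is the desired model structure.

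The two points I expect to be most delicate are the set-theoretic and combinatoriality bookkeeping needed to legitimately form the localization, and the careful comparison between the homotopy-mapping-space formulation of $S$-locality and the $[-,-]_s$-bijection formulation of $\mathbb{A}^1$-locality. A more self-contained alternative would sidestep the abstract machinery entirely by constructing an explicit $\mathbb{A}^1$-fibrant replacement functor from the singular functor $\mathrm{Sing}^{\mathbb{A}^1}_*$ and verifying the model-category axioms by hand, which is essentially the route taken by Morel-Voevodsky.
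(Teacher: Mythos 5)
Your proposal is correct in outline, but it takes a genuinely different route from the proof the paper is pointing to: the theorem is quoted from Morel--Voevodsky, whose own argument does not invoke a general existence theorem for left Bousfield localizations (the combinatorial form you need essentially postdates their paper) but instead constructs an explicit $\mathbb{A}^1$-localization functor --- a transfinite iteration of the $\mathrm{Sing}^{\mathbb{A}^1}_*$ construction interleaved with simplicial fibrant replacement, within their general framework for localizing proper simplicial model categories of sheaves --- and verifies the model axioms for the resulting class of $\mathbb{A}^1$-weak equivalences directly. Your route buys brevity and conceptual transparency, at the price of two pieces of nontrivial input that you should cite rather than assert: (i) the injective (Joyal--Jardine) simplicial model structure on $\Spck$ is left proper and \emph{combinatorial} --- local presentability is clear since the Nisnevich site in question is essentially small, but cofibrant generation by the monomorphisms requires a cardinality-bound argument (Jardine, Beke), and note that Hirschhorn's localization theorem requires cellularity, which the injective structure is not known to enjoy, so you must use the combinatorial version of the existence theorem (J.~Smith; see Barwick or Lurie); (ii) the reconciliation of $S$-locality with the paper's $[-,-]_s$-formulation, where the clean mechanism is that $S$-local objects are closed under simplicial cotensors $\mathcal{Z}^K$ (which converts $\pi_0$-bijections against all local targets into weak equivalences of mapping spaces), and the passage from representable generators to arbitrary $\mathcal{Y}$ uses that $S$-local equivalences are closed under homotopy colimits, that every simplicial sheaf is a homotopy colimit of representables, and that $-\times\mathbb{A}^1$ preserves such colimits; your sketch gestures at both points correctly but they are where the real content sits. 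What the Morel--Voevodsky route buys in exchange for its extra labor is precisely the explicit resolution functor $\mathrm{Ex}_{\mathbb{A}^1}$ with a natural transformation $\mathrm{id}\to\mathrm{Ex}_{\mathbb{A}^1}$, which this paper actually uses later (Theorem \ref{A1-resol}, i.e., their Lemmas 3.20--3.21) to define $\mathbb{A}^1$-homotopy sheaves; the abstract localization yields a fibrant replacement only non-explicitly, via the small object argument.
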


This model structure is called the $\mathbb{A}^1$\textit{-model structure}. Unstable $\mathbb{A}^1$-homotopy theory is the homotopy theory for this model structure. We denote $\mathcal{H}_{\mathbb{A}^1}(k)$ for its homotopy category and write $[\mathcal{X},\mathcal{Y}]_{\mathbb{A}^1} = \Hom_{\mathcal{H}_{\mathbb{A}^1}(k)}(\mathcal{X},\mathcal{Y})$ for $k$-spaces $\mathcal{X}$ and $\mathcal{Y}$. 

\subsection{$\mathbb{A}^1$-homotopy sheaves}

We recall basic facts of $\mathbb{A}^1$-homotopy sheaves introduced by Morel-Voevodsky \cite{MV}. For a $k$-space $\mathcal{X}$, the $\mathbb{A}^1$\textit{-connected components of $\mathcal{X}$}, denoted by $\pi_0^{\mathbb{A}^1}(\mathcal{X})$, is defined as the Nisnevich sheafification of the presheaf
\begin{equation*}
U \mapsto [U,\mathcal{X}]_{\mathbb{A}^1}.
\end{equation*}
The $k$-space $\mathcal{X}$ is called \textit{$\mathbb{A}^1$-connected} if $\pi_0^{\mathbb{A}^1}(\mathcal{X}) \cong \Spec k$. By the following theorem, the $\mathbb{A}^1$-weak equivalence class of a $k$-space can be represented by an $\mathbb{A}^1$-local space.

\begin{thm}[Morel-Voevodsky {\cite[Lem. 3.20 and Lem. 3.21.]{MV}}]\label{A1-resol}
There exists an endofunctor $\mathrm{Ex}_{\mathbb{A}^1}$ of $\Spck$ and a natural transformation $\theta:\mathrm{id} \to \mathrm{Ex}_{\mathbb{A}^1}$ such that $\mathrm{Ex}_{\mathbb{A}^1}(\mathcal{X})$ is $\mathbb{A}^1$-local and $\mathcal{X} \to \mathrm{Ex}_{\mathbb{A}^1}(\mathcal{X})$ is an $\mathbb{A}^1$-weak equivalence for every $\mathcal{X} \in \Spck$.
\end{thm}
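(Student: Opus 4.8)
The plan is to construct $\mathrm{Ex}_{\mathbb{A}^1}$ as a transfinite iteration of two functorial operations on $\Spck$: a simplicial fibrant replacement and an $\mathbb{A}^1$-singular construction. The guiding principle is that $\mathbb{A}^1$-local spaces are exactly those that are simultaneously simplicially fibrant and invariant in the $\mathbb{A}^1$-direction, but neither operation alone achieves both properties at once, so one must alternate them and pass to a colimit. First I would record the functorial simplicial fibrant replacement $\mathrm{Ex}_s \colon \Spck \to \Spck$ together with a natural transformation $\mathrm{id} \to \mathrm{Ex}_s$ whose value on each $\mathcal{X}$ is a simplicial weak equivalence, indeed a trivial cofibration, onto a simplicially fibrant space. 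This exists by the small object argument applied to the generating trivial cofibrations of the simplicial model structure (Theorem \ref{smodel}), which is cofibrantly generated. In particular, a simplicial weak equivalence is an $\mathbb{A}^1$-weak equivalence, so $\mathcal{X} \to \mathrm{Ex}_s(\mathcal{X})$ is always an $\mathbb{A}^1$-weak equivalence.

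Next I would introduce the $\mathbb{A}^1$-singular functor. Let $\Delta^\bullet_{\mathbb{A}^1}$ denote the algebraic cosimplicial object with $\Delta^n_{\mathbb{A}^1} = \Spec\bigl(k[t_0,\dots,t_n]/(\textstyle\sum_i t_i - 1)\bigr) \cong \mathbb{A}^n$, and set $\mathrm{Sing}^{\mathbb{A}^1}(\mathcal{X})$ to be the diagonal of the bisimplicial sheaf $(U,n) \mapsto \mathcal{X}\bigl(U \times \Delta^n_{\mathbb{A}^1}\bigr)$. The $0$-vertex $\Delta^0_{\mathbb{A}^1} = \Spec k$ induces a natural map $\mathcal{X} \to \mathrm{Sing}^{\mathbb{A}^1}(\mathcal{X})$, which is a (split) monomorphism since it is pullback along the split projections $U \times \Delta^n_{\mathbb{A}^1} \to U$. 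The key property I would prove is that this map is an $\mathbb{A}^1$-weak equivalence: because each $\Delta^n_{\mathbb{A}^1}$ is $\mathbb{A}^1$-contractible, the cosimplicial structure supplies an $\mathbb{A}^1$-homotopy retracting $\mathrm{Sing}^{\mathbb{A}^1}(\mathcal{X})$ onto $\mathcal{X}$. Moreover $\mathrm{Sing}^{\mathbb{A}^1}(\mathcal{X})$ is $\mathbb{A}^1$-invariant, in the sense that the projection $\mathrm{Sing}^{\mathbb{A}^1}(\mathcal{X}) \times \mathbb{A}^1 \to \mathrm{Sing}^{\mathbb{A}^1}(\mathcal{X})$ is a simplicial weak equivalence. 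The obstacle that forces iteration is that $\mathrm{Sing}^{\mathbb{A}^1}$ need not preserve simplicial fibrancy, while $\mathrm{Ex}_s$ need not preserve $\mathbb{A}^1$-invariance, so no single application lands in the $\mathbb{A}^1$-local spaces.

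Finally I would assemble the functor. Put $L = \mathrm{Ex}_s \circ \mathrm{Sing}^{\mathbb{A}^1}$, equipped with the natural transformation $\mathrm{id} \to L$ obtained by composing the two previous ones; by the above, its components are monomorphisms and $\mathbb{A}^1$-weak equivalences. I would then define $\mathrm{Ex}_{\mathbb{A}^1}(\mathcal{X}) = \operatorname{colim}_{\alpha < \lambda} L^{\alpha}(\mathcal{X})$ for a suitable infinite regular cardinal $\lambda$ chosen large relative to the accessibility bounds of $\Smk$, taking colimits at limit stages, and let $\theta_{\mathcal{X}}$ be the induced map; naturality of $\theta$ is inherited from that of $L$ and of filtered colimits. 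Since $\theta_{\mathcal{X}}$ is a transfinite composite of $\mathbb{A}^1$-trivial cofibrations, it is an $\mathbb{A}^1$-weak equivalence by Theorem \ref{A1model}. The substantive point, and the \emph{main obstacle}, is to verify that the colimit is $\mathbb{A}^1$-local. This rests on two ingredients: the characterization that an $\mathbb{A}^1$-local object is precisely a simplicially fibrant, $\mathbb{A}^1$-invariant space, and the cardinality bookkeeping ensuring that the defining conditions for simplicial fibrancy and $\mathbb{A}^1$-invariance, being detected by maps out of finitely presented objects, are preserved under the $\lambda$-filtered colimit and that the process stabilizes. Granting these, $\mathrm{Ex}_{\mathbb{A}^1}(\mathcal{X})$ is simplicially fibrant and $\mathbb{A}^1$-invariant, hence $\mathbb{A}^1$-local in the sense of Section \ref{BMS}, completing the construction.
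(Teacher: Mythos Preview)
The paper does not prove this statement; it is quoted from \cite[Lem.~3.20 and 3.21]{MV} and used as a black box. Your sketch is essentially the construction given in that reference: Morel--Voevodsky build $\mathrm{Ex}_{\mathbb{A}^1}$ precisely by iterating the composite of the $\mathbb{A}^1$-singular functor with a functorial simplicial fibrant replacement and passing to the colimit of the resulting tower, so at the level of strategy there is nothing to compare.

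One point deserves correction. The invariance property of $\mathrm{Sing}^{\mathbb{A}^1}(\mathcal{X})$ that you need is that for every $U \in \Smk$ the map of simplicial sets $\mathrm{Sing}^{\mathbb{A}^1}(\mathcal{X})(U) \to \mathrm{Sing}^{\mathbb{A}^1}(\mathcal{X})(U \times \mathbb{A}^1)$ is a homotopy equivalence, not that the projection $\mathrm{Sing}^{\mathbb{A}^1}(\mathcal{X}) \times \mathbb{A}^1 \to \mathrm{Sing}^{\mathbb{A}^1}(\mathcal{X})$ is a simplicial weak equivalence. The latter is false already for $\mathcal{X} = \Spec k$, since $\mathbb{A}^1 \to \Spec k$ is not a simplicial weak equivalence (its stalks are infinite discrete sets). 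It is the former, sectionwise condition that, together with simplicial fibrancy, characterises $\mathbb{A}^1$-locality in \cite{MV}, so the rest of your argument goes through once this is stated correctly. As a minor aside, a countable iteration already suffices in \cite{MV} because the relevant lifting problems are against $\omega$-compact sources; your use of a larger regular cardinal is harmless but unnecessary.
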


A pair of a $k$-space $\mathcal{X}$ and a morphism $x:\Spec k \to \mathcal{X}$ is called a \textit{pointed $k$-space}. We often denote it simply by $\mathcal{X}$. For a pointed $k$-space $\mathcal{X}$, we also regard $\mathrm{Ex}_{\mathbb{A}^1}(\mathcal{X})$ as a pointed space by the point $\Spec k \to \mathcal{X} \to \mathrm{Ex}_{\mathbb{A}^1}(\mathcal{X})$. For a pointed simplicial set $S$ and positive integer $i$, we denote $\pi_i(S)$ for the $i$-th simplicial homotopy group.

For a pointed $k$-space $\mathcal{X}$ and a positive integer $i$, \textit{$\mathbb{A}^1$-homotopy sheaf} of $\mathcal{X}$, say $\pi_i^{\mathbb{A}^1}(\mathcal{X})$, is defined as the Nisnevich sheafification of the presheaf
\begin{equation*}
U \mapsto \pi_i(\mathrm{Ex}_{\mathbb{A}^1}(\mathcal{X})(U)).
\end{equation*}
A $k$-space $\mathcal{X}$ is called \textit{$\mathbb{A}^1$-$n$-connected} if it is $\mathbb{A}^1$-connected and $\pi_i^{\mathbb{A}^1}(\mathcal{X}) = 0$ for all $0 < i \leq n$. Especially, $\pi_1^{\mathbb{A}^1}(\mathcal{X})$ is called the \textit{$\mathbb{A}^1$-fundamental sheaf}, and ``$\mathbb{A}^1$-$1$-connected'' is also rephrased as ``\textit{$\mathbb{A}^1$-simply connected}''. If we say ``$\mathbb{A}^1$-$(-1)$-connected'' it means ``non-empty''. 

$\mathbb{A}^1$-homotopy sheaves are $\mathbb{A}^1$-weakly homotopy invariant. Indeed, a morphism of $k$-spaces $\mathcal{X} \to \mathcal{Y}$ is an $\mathbb{A}^1$-weak equivalence if and only if $\mathrm{Ex}_{\mathbb{A}^1}(\mathcal{X}) \to \mathrm{Ex}_{\mathbb{A}^1}(\mathcal{Y})$ is a simplicial weak equivalence by Yoneda's lemma in $\mathcal{H}_s(k)$. Therefore, $\mathbb{A}^1$-weak equivalences induce isomorphisms for all stalks of $\mathbb{A}^1$-homotopy sheaves.


A Nisnevich sheaf of groups $G$ is called \textit{strongly $\mathbb{A}^1$-invariant} (\cite{Mo2}) if for every $X \in \Smk$ and $i \in \{0,1\}$ the projection $X \times \mathbb{A}^1 \to X$ induces a bijection
\begin{equation*}
H_{Nis}^i(X,G) \to H_{Nis}^i(X\times \mathbb{A}^1,G).
\end{equation*}
We denote $\mathcal{G}r^{\mathbb{A}^1}_k$ for the category of strongly $\mathbb{A}^1$-invariant sheaves. By \cite[Thm. 5.1]{Mo2}, $\mathbb{A}^1$-fundamental sheaves are strongly $\mathbb{A}^1$-invariant.

\subsection{$\mathbb{A}^1$-homology theory}

Morel \cite{Mo2} introduced an $\mathbb{A}^1$-version of singular homology theory, called $\mathbb{A}^1$-homology theory, with coefficients $\mathbb{Z}$. By using the theory of Cisinski and \'Deglise \cite{CD12}, Asok \cite{As} constructs its generalization for coefficients every commutative unital ring $R$.

For an abelian category $\mathscr{A}$, let $\mathcal{C}_{\bullet}(\mathscr{A})$ be the unbounded chain complexes of degree $-1$ in $\mathscr{A}$, and $\mathcal{D}_{\bullet}(\mathscr{A})$ the unbounded derived category of $\mathscr{A}$, \textit{i.e.}, the localization of $\mathcal{C}_{\bullet}(\mathscr{A})$ by quasi-isomorphisms. Every object of $\mathscr{A}$ is viewed as a complex concentrated in degree zero. For a chain complex $A_{\bullet} \in \mathcal{C}_{\bullet}(\mathscr{A})$, we write $A^{\bullet}$ for the cochain complex $A^i = A_{-i}$. Let $\Mod(R)$ be the category of Nisnevich sheaves of $R$-modules. We especially write $\Ab = \Mod(\mathbb{Z})$. 

Following \cite{CD12}, a chain complex $A_{\bullet} \in \mathcal{C}_{\bullet}(\Mod(R))$ is called $\mathbb{A}^1$\textit{-local} if for every smooth $k$-scheme $X$ and every $i \in \mathbb{Z}$ the projection $X \times \mathbb{A}^1 \to X$ induces an isomorphism between Nisnevich hypercohomology groups
\begin{equation*}
\mathbb{H}^i_{Nis}(X,A^{\bullet}) \to \mathbb{H}^i_{Nis}(X \times \mathbb{A}^1,A^{\bullet}).
\end{equation*}
If $M \in \Mod(R)$ is $\mathbb{A}^1$-local as a complex, it is called \textit{strictly $\mathbb{A}^1$-invariant}. We denote $\ModA(R)$ for the full subcategory of $\Mod(R)$ consisting of strictly $\mathbb{A}^1$-invariant sheaves. We especially write $\AbA = \ModA(\mathbb{Z})$. A morphism $f: B_{\bullet} \to C_{\bullet}$ in $\mathcal{C}_{\bullet}(\Mod(R))$ is called an $\mathbb{A}^1$\textit{-quasi-isomorphism} if for every $\mathbb{A}^1$-local complex $A_{\bullet}$ the induced map
\begin{equation*}
f^* : \Hom_{\mathcal{D}_{\bullet}(\Mod(R))}(C_{\bullet},A_{\bullet}) \to \Hom_{\mathcal{D}_{\bullet}(\Mod(R))}(B_{\bullet},A_{\bullet})
\end{equation*}
is an isomorphism.


\begin{thm}[Cisinski-D\'{e}glise {\cite[Cor. 1.1.17]{CD12}}]\label{A1-localization}
Let $\mathcal{D}_{\bullet}(\Mod(R))^{\mathbb{A}^1-loc}$ be the full subcategory of $\mathcal{D}_{\bullet}(\Mod(R))$ consisting of $\mathbb{A}^1$-local complexes. Then the inclusion
\begin{equation*}
\mathcal{D}_{\bullet}(\Mod(R))^{\mathbb{A}^1-loc} \hookrightarrow \mathcal{D}_{\bullet}(\Mod(R))
\end{equation*}
admits a left adjoint functor
\begin{equation*}
L_{\mathbb{A}^1}:\mathcal{D}_{\bullet}(\Mod(R)) \to \mathcal{D}_{\mathbb{A}^1}(k,R) \simeq \mathcal{D}_{\bullet}(\Mod(R))^{\mathbb{A}^1-loc},
\end{equation*}
which is called the \textit{$\mathbb{A}^1$-localization functor}.
\end{thm}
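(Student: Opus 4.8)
The plan is to realize this statement as an instance of Bousfield localization in the triangulated (equivalently, model-categorical) setting. Write $\mathcal{T} = \mathcal{D}_{\bullet}(\Mod(R))$ for brevity. The essential structural input is that $\mathcal{T}$ is the derived category of the Grothendieck abelian category $\Mod(R)$ of Nisnevich sheaves of $R$-modules, so it is a compactly generated triangulated category: a set of compact generators is furnished by the representable sheaves $R(X)$, $X \in \Smk$, together with their shifts, since $\Smk$ is essentially small, the Nisnevich topology is defined by finite covers, and the $R(X)$ detect quasi-isomorphisms stalkwise. Equivalently, $\mathcal{T}$ is the homotopy category of a combinatorial model structure on $\mathcal{C}_{\bullet}(\Mod(R))$, for instance the injective model structure.

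First I would reformulate $\mathbb{A}^1$-locality as a right orthogonality condition. Let $\mathcal{S} \subseteq \mathcal{T}$ be the smallest localizing subcategory (closed under shifts, cones, and arbitrary coproducts) containing the set of cones
\begin{equation*}
C_X := \mathrm{cone}\bigl(R(X\times\mathbb{A}^1)\to R(X)\bigr), \qquad X\in\Smk,
\end{equation*}
of the maps induced by the projections. By Yoneda in the derived category one has $\Hom_{\mathcal{T}}(R(X),A_{\bullet}[i]) \cong \mathbb{H}^i_{Nis}(X,A^{\bullet})$, so applying $\Hom_{\mathcal{T}}(-,A_{\bullet}[i])$ to the triangle defining $C_X$ shows that $A_{\bullet}$ is $\mathbb{A}^1$-local, in the sense defined above, exactly when $\Hom_{\mathcal{T}}(C_X[i],A_{\bullet})=0$ for all $X\in\Smk$ and all $i\in\mathbb{Z}$. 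Since $\mathcal{S}$ is generated by this set, this is equivalent to $A_{\bullet}\in\mathcal{S}^{\perp}$, giving the identification $\mathcal{D}_{\bullet}(\Mod(R))^{\mathbb{A}^1-loc}=\mathcal{S}^{\perp}$.

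Next I would invoke the localization theorem for compactly generated triangulated categories (Neeman's Brown representability argument, or equivalently the left Bousfield localization of the combinatorial model structure at the set of maps $\{R(X\times\mathbb{A}^1)\to R(X)\}$ via Smith's theorem). Because $\mathcal{S}$ is generated by a \emph{set} of compact objects inside the compactly generated category $\mathcal{T}$, the inclusion $\mathcal{S}^{\perp}\hookrightarrow\mathcal{T}$ admits a left adjoint: each object $M$ fits into a functorial triangle $N\to M\to L_{\mathbb{A}^1}M$ with $N\in\mathcal{S}$ and $L_{\mathbb{A}^1}M\in\mathcal{S}^{\perp}$, and $M\mapsto L_{\mathbb{A}^1}M$ is the desired $\mathbb{A}^1$-localization functor. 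The equivalence $\mathcal{D}_{\mathbb{A}^1}(k,R)\simeq\mathcal{S}^{\perp}$ is then the standard identification of the Verdier quotient $\mathcal{T}/\mathcal{S}$ (the localization at $\mathbb{A}^1$-quasi-isomorphisms, which are exactly the maps with cone in $\mathcal{S}$) with the orthogonal complement of local objects.

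I expect the main obstacle to be the smallness and set-theoretic bookkeeping rather than any conceptual step: one must verify that $\mathbb{A}^1$-acyclicity is detected by the set $\{C_X\}$ and not merely a proper class, that $\mathcal{S}$ is genuinely a set-generated localizing subcategory, and that $\mathcal{T}$ is compactly (or at least well-) generated so that Brown representability applies. In the model-categorical formulation the same content appears as the verification that $\mathcal{C}_{\bullet}(\Mod(R))$ is combinatorial, so that the left Bousfield localization at a set of maps exists and its fibrant objects are precisely the $\mathbb{A}^1$-local complexes. Once these foundational facts are secured, the existence of $L_{\mathbb{A}^1}$ and the stated adjunction follow formally from the localization machinery.
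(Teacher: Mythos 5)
The paper does not actually prove this statement: it is imported verbatim from Cisinski--D\'eglise \cite[Cor.~1.1.17]{CD12}, so there is no internal proof to compare against. Your proposal is a correct reconstruction of the standard argument, and it is essentially the route taken in the cited source, where the $\mathbb{A}^1$-localization is obtained as a left Bousfield localization of a combinatorial model structure on complexes of Nisnevich sheaves; your triangulated formulation via the localizing subcategory generated by the cones $C_X$ is the same content in Neeman's language, and your identification of the $\mathbb{A}^1$-local complexes with $\mathcal{S}^{\perp}$ and of the $\mathbb{A}^1$-quasi-isomorphisms with the maps whose cone lies in $\mathcal{S}$ is accurate. Two points deserve sharpening. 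First, your stated reason for compact generation is slightly off: that the $R(X)$ detect quasi-isomorphisms gives a \emph{generating} set but not compactness; compactness of $R(X)$, i.e.\ that $\Hom_{\mathcal{D}_{\bullet}(\Mod(R))}(R(X),-)\cong\mathbb{H}^{*}_{Nis}(X,-)$ commutes with arbitrary direct sums of unbounded complexes, requires the finite Nisnevich cohomological dimension of $X$ (precisely the vanishing theorem \cite[Thm.~1.32]{Nis} that the paper uses elsewhere) together with quasi-compactness of $X$ so that cohomology commutes with filtered colimits. Your fallback to well-generatedness does repair this independently, since the derived category of any Grothendieck abelian category is well generated and Bousfield localization at a set-generated localizing subcategory exists in that setting. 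Second, in the model-categorical variant one must localize at the set of all shifts $R(X\times\mathbb{A}^1)[n]\to R(X)[n]$, $n\in\mathbb{Z}$, since locality with respect to the unshifted maps only controls the mapping-space homotopy groups in non-negative degrees, whereas the definition of $\mathbb{A}^1$-locality involves hypercohomology in all degrees; your triangulated formulation with the cones $C_X[i]$ for all $i\in\mathbb{Z}$ already handles this correctly, so this is a caveat only for the parenthetical model-category version.
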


Theorem \ref{A1-localization} is for the derived category of cochain complexes $\mathcal{D}^{\bullet}(\Mod(R))$. By applying the isomorphism $\mathcal{D}_{\bullet}(\Mod(R)) \to \mathcal{D}^{\bullet}(\Mod(R)); A_{\bullet} \mapsto A^{\bullet}$, we also obtain a homological version.

For $\mathcal{X} \in \Spck$, we define the simplicial Nisnevich sheaf of $R$-modules $R(\mathcal{X})$ by that the $n$-skeleton $R(X)_n$ is the sheafification of the presheaf 
\begin{equation*}
U \mapsto \bigoplus_{s \in \mathcal{X}_n(U)}R.
\end{equation*}
We write $C(\mathcal{X},R)$ for the normalized chain complex of $R(X)$. For $\mathcal{X} \in \Spck$, the $\mathbb{A}^1$-local complex 
\begin{equation*}
C^{\mathbb{A}^1}(\mathcal{X},R) = L_{\mathbb{A}^1}(C(\mathcal{X},R))
\end{equation*}
is called the \textit{$\mathbb{A}^1$-chain complex of $\mathcal{X}$ with coefficients $R$}. The \textit{$\mathbb{A}^1$-homology sheaf} $\mathbf{H}_{i}^{\mathbb{A}^1}(\mathcal{X},R)$ is defined as the homology of $C^{\mathbb{A}^1}(\mathcal{X},R)$, which is a Nisnevich sheaf. By \cite[Thm. 5.22 and Cor. 5.23]{Mo2}, $\mathbf{H}_{i}^{\mathbb{A}^1}(\mathcal{X},R)$ is trivial for all $i<0$ and strictly $\mathbb{A}^1$-invariant for all $i \geq 0$. We especially write $C^{\mathbb{A}^1}(\mathcal{X})=C^{\mathbb{A}^1}(\mathcal{X},\mathbb{Z})$ and $\mathbf{H}_{i}^{\mathbb{A}^1}(\mathcal{X})=\mathbf{H}_{i}^{\mathbb{A}^1}(\mathcal{X},\mathbb{Z})$. Finally, the \textit{reduced $\mathbb{A}^1$-homology sheaf} $\tilde{\mathbf{H}}_{i}^{\mathbb{A}^1}(\mathcal{X},R)$ is defined as the kernel of the natural morphism $\mathbf{H}_{i}^{\mathbb{A}^1}(\mathcal{X},R) \to \mathbf{H}_{i}^{\mathbb{A}^1}(\Spec k,R)$.

\begin{examp}[Asok {\cite[Example 2.6]{As}}]\label{A^1homspeck}
The zeroth $\mathbb{A}^1$-homology sheaf $\mathbf{H}_{0}^{\mathbb{A}^1}(\Spec k,R)$ is isomorphic to the constant sheaf of $R$ and the $i$-th $\mathbb{A}^1$-homology sheaf $\mathbf{H}_{i}^{\mathbb{A}^1}(\Spec k,R)$ is trivial for all $i \geq 1$. Therefore, $C^{\mathbb{A}^1}(\Spec k,R)$ is quasi-isomorphic to $R(\Spec k)$.
\end{examp}


\subsection{Eirenberg-Maclane spaces and adjunctions}\label{adj}

We denote $\Delta^{op} \Mod(R)$ for the category of simplicial objects of $\Mod(R)$ and write $\mathcal{C}_{\geq0}(\Mod(R))$ for the category of chain complexes of $\Mod(R)$ supported in degree $\geq0$. The Dold-Kan correspondence gives an equivalence of categories $K:\mathcal{C}_{\geq0}(\Mod(R)) \to \Delta^{op} \Mod(R)$. Morel-Voevodsky \cite{MV} defined the \textit{Eirenberg-Maclane space} $K(M,n)$ of $M \in \Mod(R)$ as $K(M[n])$.

\begin{lem}[Asok {\cite[(2.1)]{As}}]\label{E-Madj}
For $\mathcal{X} \in \Smk$, $M \in \Mod(R)$ and a non-negative integer $n$, there exists a natural isomorphism
\begin{equation*}
[\mathcal{X},K(M.n)]_s \cong \Hom_{\mathcal{D}_{\bullet}(\Mod(R))}(C(\mathcal{X},R),M[n]).
\end{equation*}
\end{lem}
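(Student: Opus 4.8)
The plan is to obtain the isomorphism by combining the free--forgetful adjunction between $\Spck$ and $\Delta^{op}\Mod(R)$ with the Dold--Kan correspondence. Let $U \colon \Delta^{op}\Mod(R) \to \Spck$ be the functor sending a simplicial sheaf of $R$-modules to its underlying simplicial sheaf of sets; by construction $R(-)$ is left adjoint to $U$, so that $\Hom_{\Spck}(\mathcal{X}, U(A)) \cong \Hom_{\Delta^{op}\Mod(R)}(R(\mathcal{X}), A)$ for every $A$. Since $K(M,n) = K(M[n])$ by definition, the target of our bracket is $U$ applied to $K(M[n])$. The first step is therefore to promote this point-set adjunction to the homotopy categories.

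First I would endow $\Delta^{op}\Mod(R)$ with the local model structure whose weak equivalences are the stalkwise (Nisnevich-local) quasi-isomorphisms, transported through the Dold--Kan equivalence $K$ from the projective structure on $\mathcal{C}_{\geq 0}(\Mod(R))$, and check that $(R(-), U)$ is a Quillen adjunction relative to the simplicial model structure of Theorem \ref{smodel}. The functor $R(-)$ preserves monomorphisms and, since the free $R$-module functor turns a stalkwise weak equivalence of simplicial sets into a homology isomorphism, it preserves all simplicial weak equivalences; as cofibrations in $\Spck$ are precisely the monomorphisms this makes $R(-)$ left Quillen and shows every scheme $\mathcal{X}$ is cofibrant. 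Replacing $K(M[n])$ by a Nisnevich-local fibrant model, the derived adjunction yields a natural isomorphism $[\mathcal{X}, K(M,n)]_s \cong [R(\mathcal{X}), K(M[n])]$ in the homotopy category of the local model structure on $\Delta^{op}\Mod(R)$.

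Next I would run the Dold--Kan correspondence in reverse. The normalization functor inverse to $K$ sends the local homotopy category of $\Delta^{op}\Mod(R)$ isomorphically onto the bounded-below derived category $\mathcal{D}_{\geq 0}(\Mod(R))$, carrying $R(\mathcal{X})$ to its normalized complex $C(\mathcal{X},R)$ and $K(M[n])$ to $M[n]$. This gives $[R(\mathcal{X}), K(M[n])] \cong \Hom_{\mathcal{D}_{\geq 0}(\Mod(R))}(C(\mathcal{X},R), M[n])$. Finally, because $n \geq 0$ both $C(\mathcal{X},R)$ and $M[n]$ are concentrated in nonnegative degrees, and the inclusion $\mathcal{D}_{\geq 0}(\Mod(R)) \hookrightarrow \mathcal{D}_{\bullet}(\Mod(R))$ is fully faithful on such objects, so the $\Hom$ group is unchanged upon passing to the unbounded derived category. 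The asserted isomorphism follows, and its naturality in $\mathcal{X}$ is immediate since each functor in the chain is natural.

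The step I expect to be most delicate is the model-categorical bookkeeping in the sheaf setting, concentrated in the second paragraph. One must make sure the transported model structure on $\Delta^{op}\Mod(R)$ has its weak equivalences given by Nisnevich-local quasi-isomorphisms, so that its homotopy category is genuinely the derived category $\mathcal{D}_{\geq 0}(\Mod(R))$ of Nisnevich sheaves and not the coarser presheaf-level analogue; this is exactly the point where the Nisnevich topology enters. Equally, one must verify that replacing $K(M[n])$ by a local-fibrant object does not change $[\mathcal{X}, K(M,n)]_s$, i.e. that the bracket on the left is computed using the correct, locally fibrant, Eilenberg--MacLane space; this is what makes the right-hand side compute Nisnevich hypercohomology rather than a presheaf cohomology.
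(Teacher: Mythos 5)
Your proposal is correct and is essentially the proof behind the paper's own treatment: the paper offers no argument here, simply quoting Asok's (2.1), which is obtained exactly along your lines --- the free--forgetful Quillen adjunction between $\Spck$ and $\Delta^{op}\Mod(R)$, the Dold--Kan identification of the local homotopy category of $\Delta^{op}\Mod(R)$ with $\mathcal{D}_{\geq 0}(\Mod(R))$, and the full faithfulness of $\mathcal{D}_{\geq 0}(\Mod(R))$ inside $\mathcal{D}_{\bullet}(\Mod(R))$ (the same truncation adjunction the paper uses in Lemma \ref{AsH0}). The one slip is your phrase ``transported through $K$ from the projective structure'': sheaf categories generally lack enough projectives, and your own verification that $R(-)$ is left Quillen (it preserves monomorphisms and stalkwise weak equivalences) only typechecks against the injective-type local model structure on $\Delta^{op}\Mod(R)$, whose cofibrations are the monomorphisms --- with that correction the argument is complete.
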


For an abelian category $\mathscr{A}$, we write $\mathcal{D}_{\geq0}(\mathscr{A})$ (resp. $\mathcal{D}_{\leq0}(\mathscr{A})$) for the full subcategory of $\mathcal{D}_{\bullet}(\mathscr{A})$ consisting complexes $C_{\bullet}$ such that $C_i = 0$ for all $i<0$ (resp. $i > 0$). We will use the following well-known adjunction:

\begin{lem}\label{AsH0}
For an abelian category $\mathscr{A}$, the zeroth homology functor $H_0:\mathcal{D}_{\geq0}(\mathscr{A}) \to \mathscr{A}$ is left adjoint to the natural embedding $\mathscr{A} \to \mathcal{D}_{\geq0}(\mathscr{A})$.
\end{lem}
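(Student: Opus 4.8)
The plan is to recognize the claimed adjunction as a standard feature of the homological $t$-structure on $\mathcal{D}_{\bullet}(\mathscr{A})$, whose heart is $\mathscr{A}$ and whose good-truncation functors exhibit $H_0$ as a left adjoint to the embedding. Write $\iota:\mathscr{A} \to \mathcal{D}_{\geq0}(\mathscr{A})$ for the functor sending $A$ to the complex $A[0]$ concentrated in degree $0$, and abbreviate $\mathcal{D} = \mathcal{D}_{\bullet}(\mathscr{A})$. Recall that $\iota$ is fully faithful and identifies $\mathscr{A}$ with the heart $\mathcal{D}_{\geq0}(\mathscr{A}) \cap \mathcal{D}_{\leq0}(\mathscr{A})$, so that $\Hom_{\mathcal{D}}(A[0],B[0]) = \Hom_{\mathscr{A}}(A,B)$ for $A,B \in \mathscr{A}$. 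What must be produced is, for each $C_{\bullet} \in \mathcal{D}_{\geq0}(\mathscr{A})$ and each $A \in \mathscr{A}$, a bijection
\begin{equation*}
\Hom_{\mathscr{A}}(H_0(C_{\bullet}),A) \cong \Hom_{\mathcal{D}_{\geq0}(\mathscr{A})}(C_{\bullet},\iota A),
\end{equation*}
natural in both variables.

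First I would record the good truncations at the level of complexes: for $C_{\bullet} \in \mathcal{D}_{\geq0}(\mathscr{A})$ there is the subcomplex $\tau_{\geq1}C_{\bullet}$ supported in degrees $\geq1$ and the quotient $\tau_{\leq0}C_{\bullet}$, the latter being canonically identified with $H_0(C_{\bullet})[0] = \iota H_0(C_{\bullet})$. They fit into a distinguished triangle
\begin{equation*}
\tau_{\geq1}C_{\bullet} \to C_{\bullet} \xrightarrow{\eta_C} \iota H_0(C_{\bullet}) \to (\tau_{\geq1}C_{\bullet})[1]
\end{equation*}
in $\mathcal{D}$, and the morphism $\eta_C$ will serve as the unit of the adjunction. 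The map I claim to be bijective is then precomposition with $\eta_C$, namely the composite $\Hom_{\mathscr{A}}(H_0 C,A) = \Hom_{\mathcal{D}}(\iota H_0 C, \iota A) \xrightarrow{\eta_C^{*}} \Hom_{\mathcal{D}}(C_{\bullet}, \iota A)$, where the first identification is full faithfulness of $\iota$.

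To see this composite is an isomorphism I would apply the contravariant functor $\Hom_{\mathcal{D}}(-,\iota A)$ to the triangle above and extract the relevant segment of the long exact sequence,
\begin{equation*}
\Hom_{\mathcal{D}}(\tau_{\geq1}C_{\bullet},A[-1]) \to \Hom_{\mathcal{D}}(\iota H_0 C,\iota A) \xrightarrow{\eta_C^{*}} \Hom_{\mathcal{D}}(C_{\bullet},\iota A) \to \Hom_{\mathcal{D}}(\tau_{\geq1}C_{\bullet},\iota A),
\end{equation*}
where I have used $\Hom_{\mathcal{D}}\bigl((\tau_{\geq1}C_{\bullet})[1],\iota A\bigr) = \Hom_{\mathcal{D}}(\tau_{\geq1}C_{\bullet},A[-1])$. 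Both flanking terms vanish: one has $\tau_{\geq1}C_{\bullet} \in \mathcal{D}_{\geq1}(\mathscr{A})$, while $\iota A \in \mathcal{D}_{\leq0}(\mathscr{A})$ and $A[-1] \in \mathcal{D}_{\leq-1}(\mathscr{A})$, so the orthogonality axiom of the $t$-structure, which gives $\Hom_{\mathcal{D}}(X,Y) = 0$ whenever $X \in \mathcal{D}_{\geq m}(\mathscr{A})$ and $Y \in \mathcal{D}_{\leq n}(\mathscr{A})$ with $m > n$, forces both groups to be zero. Hence $\eta_C^{*}$ is an isomorphism, and together with $\Hom_{\mathcal{D}}(\iota H_0 C,\iota A) = \Hom_{\mathscr{A}}(H_0 C,A)$ this yields the desired bijection; naturality follows from naturality of $\eta_C$ and of $\iota$.

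The points that require care, and which I regard as the main obstacle, are bookkeeping rather than conceptual: fixing the homological grading so that the truncation triangle and the vanishing ranges line up (in particular that $\tau_{\leq0}$ of a connective complex is exactly $H_0[0]$, and that orthogonality is applied with the strict inequality $m>n$), and confirming that the good truncations and the associated $t$-structure are available on $\mathcal{D}_{\bullet}(\mathscr{A})$ for an arbitrary abelian $\mathscr{A}$. The latter is standard, since the good truncations are defined termwise through kernels and cokernels and descend to the derived category; no hypothesis such as the existence of enough injectives is needed.
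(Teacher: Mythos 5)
Your proof is correct and takes essentially the same route as the paper: both rest on the good-truncation adjunction $\tau_{\leq 0} \dashv \bigl(\mathcal{D}_{\leq0}(\mathscr{A}) \hookrightarrow \mathcal{D}_{\bullet}(\mathscr{A})\bigr)$ together with the identification $\tau_{\leq0}(C_{\bullet}) \cong H_0(C_{\bullet})[0]$ for connective $C_{\bullet}$. The only difference is expository --- the paper quotes that adjunction as standard, whereas you re-derive it from the truncation triangle and $t$-structure orthogonality --- so your argument is a self-contained expansion of the same proof.
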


\begin{proof}
Since the truncation functor $\tau_{\leq 0} : \mathcal{D}_{\bullet}(\mathscr{A}) \to \mathcal{D}_{\leq0}(\mathscr{A})$ is left adjoint to the inclusion $\mathcal{D}_{\leq0}(\mathscr{A}) \hookrightarrow \mathcal{D}_{\bullet}(\mathscr{A})$, we have a natural isomorphism
\begin{equation*}
\Hom_{\mathcal{D}_{\geq0}(\mathscr{A})}(C_{\bullet},A) \cong \Hom_{\mathcal{D}_{\leq0}(\mathscr{A})}(\tau_{\leq 0}(C_{\bullet}),A) = Hom_{\mathscr{A}}(H_0(C_{\bullet}),A)
\end{equation*}
for $A \in \mathscr{A}$ and $C_{\bullet} \in \mathcal{D}_{\geq0}(\mathscr{A})$.
\end{proof}


\section{$\mathbb{A}^1$-contractibility and dimension}\label{dim}

In this section, we prove Theorem \ref{int-main1}. A $k$-space $\mathcal{X}$ is called \textit{$\mathbb{A}^1$-contractible}, if the canonical morphism $\mathcal{X} \to \Spec k$ is an $\mathbb{A}^1$-weak equivalence. Morel-Voevodsky proved the Whitehead theorem for $\mathbb{A}^1$-homotopy (see \cite[Prop. 2.1.4]{MV}), which says that a scheme $X$ is $\mathbb{A}^1$-contractible if and only if it is $\mathbb{A}^1$-$n$-connected for all $n \geq 0$. Our theorem is an improvement of this theorem, showing that the range of $n$ can be actually restricted to $0 \leq n \leq \dim X$. We first prepare a lemma which relates the $\mathbb{A}^1$-homology sheaf to the Nisnevich cohomology group. This is a generalization of a result of Asok \cite[Lem. 3.3]{As} where the case $n=0$ is proved.

\begin{lem}\label{key}
Let $X$ be a smooth $k$-scheme with a $k$-rational point, $M$ a strictly $\mathbb{A}^1$-invariant sheaf of $R$-modules, and $n \geq 0$.  If $\tilde{\mathbf{H}}_i^{\mathbb{A}^1}(X,R)=0$ for every $i<n$, then there exists a natural isomorphism
\begin{equation*}
H^n_{Nis}(X,M) \to \Hom_{\ModA(R)}(\mathbf{H}_n^{\mathbb{A}^1}(X,R),M).
\end{equation*}
\end{lem}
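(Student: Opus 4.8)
The plan is to translate the Nisnevich cohomology group on the left into a Hom-group in the derived category $\mathcal{D}_{\bullet}(\Mod(R))$, pass to the $\mathbb{A}^1$-local world, and then read off the answer from the homology sheaves of $C^{\mathbb{A}^1}(X,R)$ by a truncation argument. First I would record the representability of Nisnevich cohomology by Eilenberg--MacLane spaces, $H^n_{Nis}(X,M)\cong[X,K(M,n)]_s$ (Morel--Voevodsky), and combine it with Lemma \ref{E-Madj} to get a natural isomorphism
\begin{equation*}
H^n_{Nis}(X,M) \cong \Hom_{\mathcal{D}_{\bullet}(\Mod(R))}(C(X,R), M[n]).
\end{equation*}
Since $M$ is strictly $\mathbb{A}^1$-invariant, $M[n]$ is $\mathbb{A}^1$-local (being a shift of an $\mathbb{A}^1$-local complex), so the adjunction of Theorem \ref{A1-localization} — maps into an $\mathbb{A}^1$-local object factor uniquely through the $\mathbb{A}^1$-localization — gives
\begin{equation*}
\Hom_{\mathcal{D}_{\bullet}(\Mod(R))}(C(X,R), M[n]) \cong \Hom_{\mathcal{D}_{\bullet}(\Mod(R))}(C^{\mathbb{A}^1}(X,R), M[n]).
\end{equation*}
It then remains to compute the right-hand side from the homology sheaves $\mathbf{H}_i^{\mathbb{A}^1}(X,R)$.

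Next I would analyze those homology sheaves. They vanish for $i<0$ and are strictly $\mathbb{A}^1$-invariant. Because $X$ carries a $k$-rational point, the reduced homology splits off, $\mathbf{H}_i^{\mathbb{A}^1}(X,R)\cong\tilde{\mathbf{H}}_i^{\mathbb{A}^1}(X,R)\oplus\mathbf{H}_i^{\mathbb{A}^1}(\Spec k,R)$; combined with Example \ref{A^1homspeck} and the hypothesis $\tilde{\mathbf{H}}_i^{\mathbb{A}^1}(X,R)=0$ for $i<n$, this shows (for $n\geq1$) that $\mathbf{H}_0^{\mathbb{A}^1}(X,R)=R$, that $\mathbf{H}_i^{\mathbb{A}^1}(X,R)=0$ for $0<i<n$, and that the only remaining homology up to degree $n$ is $\mathbf{H}_n^{\mathbb{A}^1}(X,R)$. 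Applying $\Hom_{\mathcal{D}_{\bullet}(\Mod(R))}(-,M[n])$ to the truncation triangles of $C^{\mathbb{A}^1}(X,R)$ (equivalently, running the hyperext spectral sequence $E_2^{p,q}=\mathrm{Ext}^p_{\Mod(R)}(\mathbf{H}_q^{\mathbb{A}^1}(X,R),M)$), everything in degrees $>n$ contributes nothing by the standard vanishing $\Hom_{\mathcal{D}_{\bullet}}(A,B)=0$ when $A$ has homology in degrees $>m$ and $B$ in degrees $\leq m$. The surviving top homology yields $\Hom_{\mathcal{D}_{\bullet}}(\mathbf{H}_n^{\mathbb{A}^1}(X,R)[n],M[n])=\Hom_{\ModA(R)}(\mathbf{H}_n^{\mathbb{A}^1}(X,R),M)$, using that Hom between sheaves of the heart agrees in $\mathcal{D}_{\bullet}(\Mod(R))$, in $\Mod(R)$, and in its full subcategory $\ModA(R)$.

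The main obstacle is controlling the contribution of the degree-$0$ homology $\mathbf{H}_0^{\mathbb{A}^1}(X,R)=R$, which is precisely the new feature beyond Asok's case $n=0$. Its contribution to $\Hom_{\mathcal{D}_{\bullet}}(C^{\mathbb{A}^1}(X,R),M[n])$ is measured by $\mathrm{Ext}^n_{\Mod(R)}(R,M)$ and $\mathrm{Ext}^{n+1}_{\Mod(R)}(R,M)$, and I must show these vanish for $n\geq1$. For this I would identify $\mathrm{Ext}^j_{\Mod(R)}(R,M)\cong H^j_{Nis}(\Spec k,M)$ — again through Lemma \ref{E-Madj}, since the constant sheaf $R$ is quasi-isomorphic to $C(\Spec k,R)$ — and invoke that $\Spec k$ is Nisnevich-local, so its higher Nisnevich cohomology vanishes. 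This is exactly where both the $k$-rational point and the extension from degree $0$ to arbitrary $n$ are genuinely used: the intermediate vanishing $\mathbf{H}_i^{\mathbb{A}^1}(X,R)=0$ for $0<i<n$ isolates $\mathbf{H}_n^{\mathbb{A}^1}(X,R)$, while the Ext-vanishing neutralizes the stray $\mathbf{H}_0^{\mathbb{A}^1}(X,R)=R$. The case $n=0$ needs no such vanishing and recovers Asok's isomorphism directly from $\tau_{\leq0}C^{\mathbb{A}^1}(X,R)=\mathbf{H}_0^{\mathbb{A}^1}(X,R)$, completing the argument uniformly.
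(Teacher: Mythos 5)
Your proposal is correct, and its first half is exactly the paper's: represent $H^n_{Nis}(X,M)$ as $[X,K(M,n)]_s$, convert via Lemma \ref{E-Madj} to $\Hom_{\mathcal{D}_{\bullet}(\Mod(R))}(C(X,R),M[n])$, and pass to $C^{\mathbb{A}^1}(X,R)$ by the localization adjunction of Theorem \ref{A1-localization}. Where you genuinely diverge is the endgame. The paper uses the rational point at the \emph{chain} level, splitting $C^{\mathbb{A}^1}(X,R)\cong\tilde{C}^{\mathbb{A}^1}(X,R)\oplus R$ as a direct sum in the derived category; the constant summand then contributes only $\Hom_{\mathcal{D}_{\bullet}(\Mod(R))}(R,M[n])\cong H^n_{Nis}(\Spec k,M)=0$ (a single Ext-vanishing, with no connecting maps to control), and the reduced summand is handled in one stroke: since its homology vanishes below degree $n$, it is quasi-isomorphic to its truncation $\tau_{\geq n}$, and after shifting by $[-n]$ the adjunction of Lemma \ref{AsH0} yields $\Hom_{\Mod(R)}(\mathbf{H}_n^{\mathbb{A}^1}(X,R),M)$ with no spectral sequence at all. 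You instead split only the homology (using the point to get $\mathbf{H}_0^{\mathbb{A}^1}(X,R)=R$ and the hypothesis to get $\mathbf{H}_i^{\mathbb{A}^1}(X,R)=0$ for $0<i<n$) and run a Postnikov/truncation-triangle argument on the whole complex; the price of forgoing the direct-sum splitting is that you must kill both $\mathrm{Ext}^n_{\Mod(R)}(R,M)$ and $\mathrm{Ext}^{n+1}_{\Mod(R)}(R,M)$ --- the latter is precisely where the connecting map obstructing surjectivity onto $\Hom(\mathbf{H}_n^{\mathbb{A}^1}(X,R),M)$ lands --- and your identification $\mathrm{Ext}^j_{\Mod(R)}(R,M)\cong H^j_{Nis}(\Spec k,M)=0$ for $j\geq1$ (via $R\simeq C(\Spec k,R)$ and Lemma \ref{E-Madj}) disposes of both by the same mechanism the paper uses for its single vanishing. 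Both arguments are sound and of comparable generality; the paper's decomposition is marginally cleaner, while yours is the generic argument one would run without noticing the chain-level splitting. One small caution on your parenthetical ``equivalently, the hyperext spectral sequence'': $C^{\mathbb{A}^1}(X,R)$ is not bounded above, so convergence of that spectral sequence would need a word of justification; but your primary formulation via truncation triangles, discarding $\tau_{\geq n+1}$ by the t-structure orthogonality $\Hom_{\mathcal{D}_{\bullet}}(A,B)=0$ when the homology of $A$ sits strictly above that of $B$, is complete as stated and avoids the issue.
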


\begin{proof}
Since the case $n=0$ is proved by \cite[Lem. 3.3]{As}, we assume $n \geq 1$. By \cite[Prop. 1.26]{MV}, there exists a natural isomorphism 
\begin{equation*}
H^n_{Nis}(X,M) \cong [X,K(M,n)]_s.
\end{equation*}
The right-hand side is isomorphic to $\Hom_{\mathcal{D}_{\bullet}(\Mod(R))}(C(X,R),M[n])$ by Lemma \ref{E-Madj}. Since $M[n]$ is $\mathbb{A}^1$-local, we also have
\begin{equation*}
\Hom_{\mathcal{D}_{\bullet}(\Mod(R))}(C(X,R),M[n]) \cong \Hom_{\mathcal{D}_{\bullet}(\Mod(R))}(C^{\mathbb{A}^1}(X,R),M[n])
\end{equation*}
by the definition of $C^{\mathbb{A}^1}(X,R)$. Therefore, we obtain a natural isomorphism
\begin{equation}\label{eq-Nis-HomD}
H^n_{Nis}(X,M) \cong \Hom_{\mathcal{D}_{\bullet}(\Mod(R))}(C^{\mathbb{A}^1}(X,R),M[n]).
\end{equation}
By Example \ref{A^1homspeck} and $n>0$, this isomorphism gives 
\begin{align*}\label{eq-Nis-HomDspeck}
\Hom_{\mathcal{D}_{\bullet}(\Mod(R))}(R,M[n]) &\cong \Hom_{\mathcal{D}_{\bullet}(\Mod(R))}(C^{\mathbb{A}^1}(\Spec k,R),M[n])\\
&\cong H^n_{Nis}(\Spec k,M)\\
&= 0.
\end{align*}
Since $X$ has a $k$-rational point, we have $C^{\mathbb{A}^1}(X,R)=\tilde{C}^{\mathbb{A}^1}(X,R)\oplus R$. Hence,
\begin{align*}
&\Hom_{\mathcal{D}_{\bullet}(\Mod(R))}(C^{\mathbb{A}^1}(X,R),M[n])\\
&=\Hom_{\mathcal{D}_{\bullet}(\Mod(R))}(\tilde{C}^{\mathbb{A}^1}(X,R),M[n]) \oplus \Hom_{\mathcal{D}_{\bullet}(\Mod(R))}(R,M[n])\\
&=\Hom_{\mathcal{D}_{\bullet}(\Mod(R))}(\tilde{C}^{\mathbb{A}^1}(X,R),M[n]).
\end{align*}
Thus, by the isomorphism \eqref{eq-Nis-HomD}, we only need to show that if a complex $C \in \Mod(R)$ satisfies $H_i(C)=0$ for all $i<n$, then 
\begin{equation}\label{purposeisomorphism}
\Hom_{\mathcal{D}_{\bullet}(\Mod(R))}(C,M[n]) \cong \Hom_{\Mod(R)}(H_n(C),M).
\end{equation}
By our assumption, the complex $C$ is quasi-isomorphic to the complex
\begin{equation*}
\tau_{\geq n}(C) : \cdots \to C_{n+2} \to C_{n+1} \to \Ker(C_{n} \to C_{n-1}) \to 0 \to \cdots.
\end{equation*}
Then we have 
\begin{align*}
\Hom_{\mathcal{D}_{\bullet}(\Mod(R))}(C,M[n]) &\cong \Hom_{\mathcal{D}_{\bullet}(\Mod(R))}(\tau_{\geq n}(C),M[n])\\
&\cong \Hom_{\mathcal{D}_{\bullet}(\Mod(R))}(\tau_{\geq n}(C)[-n],M).
\end{align*}
Since $\tau_{\geq n}(C)[-n] \in \mathcal{D}_{\geq 0}(\Mod(R))$, Lemma \ref{AsH0} gives an isomorphism
\begin{equation*}
\Hom_{\mathcal{D}_{\bullet}(\Mod(R))}(\tau_{\geq n}(C)[-n],M) \cong \Hom_{\Mod(R)}(H_0(\tau_{\geq n}(C)[-n]),M).
\end{equation*}
Since $H_0(\tau_{\geq n}(C)[-n]) \cong H_n(\tau_{\geq n}(C)) \cong H_n(C)$, we obtain the isomorphism \eqref{purposeisomorphism}.
\end{proof}

Now we prove Theorem \ref{int-main1}.

\begin{thm}\label{main1}
Assume $k$ perfect. Let $X$ be a pointed smooth $k$-scheme of dimension $n \geq 1$. If $X$ is $\mathbb{A}^1$-$n$-connected, then it is $\mathbb{A}^1$-contractible. 
\end{thm}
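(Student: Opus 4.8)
The plan is to combine Lemma \ref{key} with Morel's $\mathbb{A}^1$-Hurewicz theorem and the vanishing of Nisnevich cohomology above the dimension. First I would observe that since $X$ is $\mathbb{A}^1$-$n$-connected, the $\mathbb{A}^1$-Hurewicz theorem of Morel \cite{Mo2} tells us that the reduced $\mathbb{A}^1$-homology sheaves $\tilde{\mathbf{H}}_i^{\mathbb{A}^1}(X)$ vanish for $1 \leq i \leq n$, and that the first potentially nonzero reduced homology sheaf is $\tilde{\mathbf{H}}_{n+1}^{\mathbb{A}^1}(X)$. So the entire task reduces to showing that this sheaf $H := \tilde{\mathbf{H}}_{n+1}^{\mathbb{A}^1}(X)$ also vanishes: once all reduced $\mathbb{A}^1$-homology sheaves vanish, Hurewicz forces all $\mathbb{A}^1$-homotopy sheaves to vanish, and then the Morel-Voevodsky Whitehead theorem \cite{MV} gives $\mathbb{A}^1$-contractibility.

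To show $H = 0$, I would use Lemma \ref{key} as a duality between cohomology and homology. With the hypothesis $\tilde{\mathbf{H}}_i^{\mathbb{A}^1}(X,R) = 0$ for all $i < n+1$ now verified, Lemma \ref{key} (applied with $n$ replaced by $n+1$, over $R = \mathbb{Z}$) yields, for every strictly $\mathbb{A}^1$-invariant sheaf $M$, a natural isomorphism
\begin{equation*}
H^{n+1}_{Nis}(X,M) \cong \Hom_{\AbA}(H,M).
\end{equation*}
The key point is that $\dim X = n$, so by the Nisnevich-cohomological dimension bound the left-hand side $H^{n+1}_{Nis}(X,M)$ vanishes for every $M$. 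Therefore $\Hom_{\AbA}(H,M) = 0$ for all strictly $\mathbb{A}^1$-invariant $M$. Since $H$ is itself strictly $\mathbb{A}^1$-invariant (by the cited results of Morel), I may take $M = H$, and the vanishing of the identity morphism in $\Hom_{\AbA}(H,H)$ forces $H = 0$. This is a Yoneda-type argument: an object of $\AbA$ that receives no nonzero maps to itself must be zero.

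Having established $\tilde{\mathbf{H}}_i^{\mathbb{A}^1}(X) = 0$ for all $i$, I would conclude by invoking the $\mathbb{A}^1$-Hurewicz theorem once more, now inductively, to deduce that $\pi_i^{\mathbb{A}^1}(X) = 0$ for all $i \geq 1$: vanishing of all reduced homology sheaves propagates through Hurewicz to vanishing of all homotopy sheaves. Combined with the hypothesis that $X$ is $\mathbb{A}^1$-connected, this means $X$ is $\mathbb{A}^1$-$m$-connected for every $m \geq 0$, and the Whitehead theorem then yields that $X \to \Spec k$ is an $\mathbb{A}^1$-weak equivalence.

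I expect the main obstacle to be the cohomological dimension step, namely justifying that $H^{n+1}_{Nis}(X,M) = 0$ when $\dim X = n$. For a Noetherian scheme of Krull dimension $n$ the Nisnevich cohomological dimension is at most $n$, so cohomology vanishes in degrees strictly above $n$; the care required is to confirm that this bound applies to the large-site sheaf $M$ restricted to the small Nisnevich site of $X$, and that the perfectness of $k$ (needed for the strict $\mathbb{A}^1$-invariance of the homotopy and homology sheaves in Morel's theory) is in force throughout. A secondary subtlety is ensuring the inductive application of the Hurewicz theorem is legitimate — that is, that $\mathbb{A}^1$-simple connectivity or the abelian-ness conditions required by Morel's formulation are met; since here all the lower homotopy and homology sheaves vanish, the hypotheses of the Hurewicz theorem are satisfied at each stage and the induction closes cleanly.
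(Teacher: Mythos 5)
Your proposal is correct and follows essentially the same route as the paper: Morel's $\mathbb{A}^1$-Hurewicz theorem combined with Lemma \ref{key}, the vanishing of Nisnevich cohomology in degrees above $\dim X = n$, and a Yoneda argument in $\AbA$, iterated to bootstrap connectivity and conclude via the $\mathbb{A}^1$-Whitehead theorem. One cosmetic remark: the paper organizes the bootstrap cleanly as ``$\mathbb{A}^1$-$m$-connected implies $\mathbb{A}^1$-$(m+1)$-connected for every $m \geq n$'' (applying Yoneda to $\pi_{m+1}^{\mathbb{A}^1}(X) \in \AbA$, which lies in $\AbA$ by \cite[Cor. 5.2]{Mo2}), which avoids your momentary overstatement that $\tilde{\mathbf{H}}_i^{\mathbb{A}^1}(X)=0$ for \emph{all} $i$ before the induction has actually been run --- your closing paragraph does describe the correct inductive repair, so this is a matter of exposition rather than a gap.
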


\begin{proof}
By the $\mathbb{A}^1$-Whitehead theorem \cite[Prop. 2.1.4]{MV}, the $k$-scheme $X$ is $\mathbb{A}^1$-weakly equivalent to $\Spec k$ if and only if $\pi_{i}^{\mathbb{A}^1}(X)$ is trivial for every $i \geq 0$. Therefore, it suffices to show that if $X$ is $\mathbb{A}^1$-$m$-connected for $m \geq n$, then it is $\mathbb{A}^1$-$(m+1)$-connected. When $X$ is $\mathbb{A}^1$-$m$-connected, Morel's $\mathbb{A}^1$-Hurewicz theorem \cite[Thm. 5.37]{Mo2} shows that $\tilde{\mathbf{H}}_i^{\mathbb{A}^1}(X)=0$ for $i \leq m$ and gives an isomorphism $\pi_{m+1}^{\mathbb{A}^1}(X) \cong \mathbf{H}_{m+1}^{\mathbb{A}^1}(X)$. Applying Lemma \ref{key} with $R=\mathbb{Z}$, we see that
\begin{equation*}
\Hom_{\AbA}(\pi_{m+1}^{\mathbb{A}^1}(X),M) \cong H^{m+1}_{Nis}(X,M)
\end{equation*}
for every $M \in \AbA$. On the other hand, $H^i_{Nis}(X,M) = 0$ when $i > \dim X = n$ (see \cite[Thm. 1.32]{Nis}). Therefore, $\Hom_{\AbA}(\pi_{m+1}^{\mathbb{A}^1}(X),M) = 0$ for every $M \in \AbA$. Since $\pi_{m+1}^{\mathbb{A}^1}(X) \in \AbA$ by \cite[Cor. 5.2]{Mo2}, Yoneda's lemma in $\AbA$ gives $\pi_{m+1}^{\mathbb{A}^1}(X) = 0$. 
\end{proof}

Lemma \ref{key} also has the following application.

\begin{rem}\label{rempic}
For $X \in \Smk$ with $\tilde{\mathbf{H}}_0^{\mathbb{A}^1}(X) = 0$, Lemma \ref{key} and \cite[Lem. 6.4.7]{Mo1} give an isomorphism
\begin{equation*}
\Hom_{\Ab}(\mathbf{H}_1^{\mathbb{A}^1}(X),\mathbb{G}_m) \cong \mathrm{Pic}(X).
\end{equation*}
By Morel's $\mathbb{A}^1$-Hurewicz theorem \cite[Thm. 5.35]{Mo2}, this isomorphism induces $\Hom_{\mathcal{G}r_k}(\pi^{\mathbb{A}^1}_1(X),\mathbb{G}_m) \cong \mathrm{Pic}(X)$. This is another proof of \cite[Prop. 5.1.4]{AM}.
\end{rem}


\section{$\mathbb{A}^1$-excision over a perfect field}\label{codim2}

In this section, we consider an $\mathbb{A}^1$-excision theorem over a perfect field. Let $U$ be $\mathbb{A}^1$-connected open subscheme of $\mathbb{A}^1$-connected $X \in \Smk$ whose complement has codimension $d$. The $\mathbb{A}^1$-excision theorem of Asok-Doran \cite[Thm. 4.1]{AD} states that if $k$ is infinite and $X$ is $\mathbb{A}^1$-$(d-3)$-connected, then the canonical morphism $\pi_{l}^{\mathbb{A}^1}(U) \to \pi_{l}^{\mathbb{A}^1}(X)$ is an isomorphism for $0 \leq l \leq d-2$ and an epimorphism for $l=d-1$. We prove a weaker version of this theorem over a perfect field:

\begin{thm}\label{main2}
Let $k$ be perfect, $U$ a pointed open subscheme of a pointed smooth $k$-scheme $X$ whose complement has codimension $d \geq 2$, and $l$ a non-negative integer. Assume that $U$ and $X$ are $\mathbb{A}^1$-connected and that $\pi_{1}^{\mathbb{A}^1}(U)$ and $\pi_{1}^{\mathbb{A}^1}(X)$ are abelian. If $X$ is $\mathbb{A}^1$-$(l-1)$-connected, then the canonical morphism
\begin{equation*}
\pi_{l}^{\mathbb{A}^1}(U) \to \pi_{l}^{\mathbb{A}^1}(X)
\end{equation*}
is an isomorphism when $0 \leq l \leq d-2$ and an epimorphism when $l=d-1$.
\end{thm}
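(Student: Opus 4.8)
The plan is to reduce the homotopy statement to an excision statement for $\mathbb{A}^1$-\emph{homology} sheaves, which holds over an arbitrary field, and then transport it across Morel's $\mathbb{A}^1$-Hurewicz theorem. Write $Z = X \setminus U$ for the closed complement, of codimension $d$, with normal bundle $N = N_{Z/X}$ of rank $d$. The first step is the \emph{homology excision}. The Morel--Voevodsky homotopy purity theorem supplies a cofiber sequence of pointed $k$-spaces $U_+ \to X_+ \to \mathrm{Th}(N)$, and applying the $\mathbb{A}^1$-chain functor $C^{\mathbb{A}^1}(-,R)$ — which carries cofiber sequences to distinguished triangles in $\mathcal{D}_{\mathbb{A}^1}(k,R)$ — produces a long exact sequence
\begin{equation*}
\cdots \to \mathbf{H}_i^{\mathbb{A}^1}(U,R) \to \mathbf{H}_i^{\mathbb{A}^1}(X,R) \to \tilde{\mathbf{H}}_i^{\mathbb{A}^1}(\mathrm{Th}(N),R) \to \mathbf{H}_{i-1}^{\mathbb{A}^1}(U,R) \to \cdots.
\end{equation*}
Since the Thom space of a rank $d$ bundle is $\mathbb{A}^1$-$(d-1)$-connected, its reduced $\mathbb{A}^1$-homology vanishes in degrees $<d$; feeding this into the sequence shows that $\mathbf{H}_i^{\mathbb{A}^1}(U,R) \to \mathbf{H}_i^{\mathbb{A}^1}(X,R)$ is an isomorphism for $i \leq d-2$ and an epimorphism for $i=d-1$. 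This is the general-degree extension of Asok's degree-$0$ computation \cite{As}.

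Now assume $k$ perfect and $X$ is $\mathbb{A}^1$-$(l-1)$-connected with $0 \leq l \leq d-1$. The case $l=0$ is immediate, as $U$ and $X$ are both $\mathbb{A}^1$-connected, so I take $l \geq 1$. The crux is to \emph{bootstrap the connectivity of $U$}, which is not assumed a priori. By Morel's $\mathbb{A}^1$-Hurewicz theorem \cite{Mo2} applied to $X$, the hypothesis gives $\tilde{\mathbf{H}}_i^{\mathbb{A}^1}(X) = 0$ for $i \leq l-1$, and since $l-1 \leq d-2$ the homology excision forces $\tilde{\mathbf{H}}_i^{\mathbb{A}^1}(U) = 0$ for $i \leq l-1$ as well. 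Using that $U$ is $\mathbb{A}^1$-connected with $\pi_1^{\mathbb{A}^1}(U)$ abelian, I then run the converse Hurewicz argument: the abelian hypothesis identifies $\pi_1^{\mathbb{A}^1}(U) \cong \mathbf{H}_1^{\mathbb{A}^1}(U)$, so $\tilde{\mathbf{H}}_1^{\mathbb{A}^1}(U)=0$ gives $\pi_1^{\mathbb{A}^1}(U)=0$, and an induction using the Hurewicz isomorphism in each successive degree upgrades the homology vanishing to the statement that $U$ is $\mathbb{A}^1$-$(l-1)$-connected.

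With both $U$ and $X$ now $\mathbb{A}^1$-$(l-1)$-connected, Morel's theorem \cite{Mo2} yields natural isomorphisms $\pi_l^{\mathbb{A}^1}(U) \cong \mathbf{H}_l^{\mathbb{A}^1}(U)$ and $\pi_l^{\mathbb{A}^1}(X) \cong \mathbf{H}_l^{\mathbb{A}^1}(X)$ compatible with the maps induced by the open immersion $U \hookrightarrow X$. The homology excision of the first step then transports directly, showing that $\pi_l^{\mathbb{A}^1}(U) \to \pi_l^{\mathbb{A}^1}(X)$ is an isomorphism for $l \leq d-2$ and an epimorphism for $l = d-1$, as claimed.

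I expect the main obstacle to be the interleaving of homology excision with the converse Hurewicz in the middle step: one cannot assume that $U$ inherits the connectivity of $X$, so this connectivity must be extracted from the homology excision, and the converse Hurewicz needed to do so is exactly where the abelian hypothesis on $\pi_1^{\mathbb{A}^1}$ becomes indispensable (without it $\mathbf{H}_1^{\mathbb{A}^1}$ computes only the abelianization of $\pi_1^{\mathbb{A}^1}$, and the degree-$1$ base case of the induction fails). A secondary technical point is checking that $C^{\mathbb{A}^1}(-,R)$ sends the purity cofiber sequence to a genuine distinguished triangle and that the Thom space connectivity holds for a possibly non-trivial $N$, which I would handle by reducing to the trivial bundle via local triviality.
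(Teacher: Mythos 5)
Your second and third steps (the Hurewicz bootstrap for $U$ and the transport of homology excision to homotopy sheaves) are essentially identical to the paper's proof of Theorem \ref{main2}: the paper likewise uses the abelian hypothesis together with \cite[Thm.~5.35 and 4.46]{Mo2} to get $\pi_1^{\mathbb{A}^1} \cong \mathbf{H}_1^{\mathbb{A}^1}$ in the base case, and then inducts, deducing the $\mathbb{A}^1$-$l$-connectedness of $U$ from the isomorphisms already established in lower degrees before applying \cite[Thm.~5.37]{Mo2} in degree $l+1$. Where you genuinely diverge is in the proof of homology excision, and that is where there is a gap: you invoke the Morel--Voevodsky homotopy purity theorem and the normal bundle $N_{Z/X}$, but purity requires the closed complement $Z = X \setminus U$ to be \emph{smooth} over $k$, and neither the theorem nor the paper's Lemma \ref{Hcodim} assumes this --- only that $Z$ has codimension $d \geq 2$, so $N_{Z/X}$ need not even exist. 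Over the perfect field of the theorem this is repairable by a standard stratification: take the smooth locus of $Z_{\mathrm{red}}$ (dense open since $k$ is perfect), apply purity on $X \setminus Z'$ where $Z'$ is the singular locus (of codimension $\geq d+1$), and conclude by Noetherian induction on $\dim Z$; but this argument must be supplied, and it does \emph{not} establish the claim, implicit in your first paragraph, that this route proves homology excision over an arbitrary field (over an imperfect field $Z_{\mathrm{red}}$ may have no smooth points). Relatedly, the $\mathbb{A}^1$-$(d-1)$-connectivity of $\mathrm{Th}(N)$, or even just the vanishing of $\tilde{\mathbf{H}}_i^{\mathbb{A}^1}(\mathrm{Th}(N),R)$ for $i < d$, is not formal: it rests on Morel's connectivity theorem (again a perfect-field statement in the standard references) plus a Mayer--Vietoris induction over a trivializing cover, which you defer but do not carry out.

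For contrast, the paper's Lemma \ref{Hcodim} avoids purity entirely: it dualizes via Lemma \ref{key}, identifying $\Hom_{\ModA(R)}(\mathbf{H}_{l+1}^{\mathbb{A}^1}(-,R),M)$ with $H^{l+1}_{Nis}(-,M)$ under the vanishing hypothesis, then quotes Morel's cohomological excision \cite[Lem.~6.4.4]{Mo1} (an isomorphism for $l+1 \leq d-2$ and a monomorphism for $l+1 = d-1$, valid with no smoothness assumption on $Z$ and over any field, since it comes from the Gersten-type coniveau argument for strictly $\mathbb{A}^1$-invariant sheaves) and concludes by Yoneda in $\ModA(R)$. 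Note the trade-off: the paper's lemma needs the hypothesis $\tilde{\mathbf{H}}_i^{\mathbb{A}^1}(X,R)=0$ for $i < l$ and a $k$-rational point (inputs to Lemma \ref{key}), whereas your purity route, once repaired as above, would give homology excision \emph{unconditionally} over a perfect field --- this is closer to the Asok--Doran strategy in \cite{AD}, and is a real advantage of your approach, but as written the smoothness gap and the unproved Thom-space connectivity prevent the first step from standing.
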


This theorem covers the finite field case which is not treated in Asok-Doran's $\mathbb{A}^1$-excision theorem. Moreover, the assumption of the $\mathbb{A}^1$-connectedness of $X$ is weaker than the Asok-Doran's result which needs that $X$ is $\mathbb{A}^1$-$(d-3)$-connected. However, we need an extra assumption that $\pi_{1}^{\mathbb{A}^1}(U)$ and $\pi_{1}^{\mathbb{A}^1}(X)$ are abelian. We first prove an $\mathbb{A}^1$-excision theorem for $\mathbb{A}^1$-homology sheaves over an arbitrary field.

\begin{lem}\label{Hcodim}
Let $U$ be an open subscheme of a smooth $k$-scheme $X$ whose compliment has codimension $d \geq 2$ and $l$ a non-negative integer. Assume $U(k) \neq \emptyset$. If $\tilde{\mathbf{H}}_i^{\mathbb{A}^1}(X,R)=0$ for every $i < l$, then the canonical morphism 
\begin{equation*}
{\mathbf{H}}_{l}^{\mathbb{A}^1}(U,R) \to {\mathbf{H}}_{l}^{\mathbb{A}^1}(X,R)
\end{equation*}
is an isomorphism when $l \leq d-2$ and an epimorphism when $l = d-1$.
\end{lem}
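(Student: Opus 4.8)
The plan is to induct on $l$, turning the statement about $\mathbb{A}^1$-homology sheaves into one about Nisnevich cohomology by means of Lemma~\ref{key}, and then to feed in a cohomological purity (coniveau) vanishing for strictly $\mathbb{A}^1$-invariant coefficients. Since $d\geq 2$, the case $l=0$ asserts only the isomorphism $\mathbf{H}_0^{\mathbb{A}^1}(U,R)\xrightarrow{\sim}\mathbf{H}_0^{\mathbb{A}^1}(X,R)$, which is Asok's result \cite{As}; so I may assume $l\geq 1$ and that the lemma is already known in every degree $<l$.

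First I would propagate the connectivity hypothesis from $X$ to $U$. For each $i<l$ one has $i\leq l-1\leq d-2$, so the inductive hypothesis supplies an isomorphism $\mathbf{H}_i^{\mathbb{A}^1}(U,R)\xrightarrow{\sim}\mathbf{H}_i^{\mathbb{A}^1}(X,R)$ compatible with the natural morphisms to $\mathbf{H}_i^{\mathbb{A}^1}(\Spec k,R)$; as $\tilde{\mathbf{H}}_i^{\mathbb{A}^1}(X,R)=0$ by assumption, this forces $\tilde{\mathbf{H}}_i^{\mathbb{A}^1}(U,R)=0$ for all $i<l$. Because $U(k)\neq\emptyset$ endows both $U$ and $X$ with a $k$-rational point, both schemes now meet the hypotheses of Lemma~\ref{key} in degree $l$.

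Next I would apply Lemma~\ref{key} to $U$ and to $X$, obtaining for every $M\in\ModA(R)$ natural isomorphisms $\Hom_{\ModA(R)}(\mathbf{H}_l^{\mathbb{A}^1}(X,R),M)\cong H^l_{Nis}(X,M)$ and $\Hom_{\ModA(R)}(\mathbf{H}_l^{\mathbb{A}^1}(U,R),M)\cong H^l_{Nis}(U,M)$. Since the isomorphism of Lemma~\ref{key} is natural in the scheme, the map induced by the canonical morphism $\alpha\colon\mathbf{H}_l^{\mathbb{A}^1}(U,R)\to\mathbf{H}_l^{\mathbb{A}^1}(X,R)$ is identified with the restriction $\rho\colon H^l_{Nis}(X,M)\to H^l_{Nis}(U,M)$, so it suffices to control $\rho$. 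Writing $Z=X\setminus U$ and using the localization sequence
\begin{equation*}
\cdots\to H^i_Z(X,M)\to H^i_{Nis}(X,M)\to H^i_{Nis}(U,M)\to H^{i+1}_Z(X,M)\to\cdots,
\end{equation*}
I would invoke the coniveau bound $H^i_Z(X,M)=0$ for $i<\codim_X Z=d$: as $M$ is strictly $\mathbb{A}^1$-invariant it is governed by a Gersten/Rost--Schmid complex, so in the coniveau spectral sequence only points of $Z$, all of codimension $\geq d$, contribute, and these sit in degrees $\geq d$. Granting this, $\rho$ is an isomorphism when $l\leq d-2$ (both $H^l_Z$ and $H^{l+1}_Z$ vanish) and a monomorphism when $l=d-1$ (only $H^{d-1}_Z$ vanishes).

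Finally, transposing back through the natural isomorphisms shows that $\alpha^\ast$ is bijective for all $M$ when $l\leq d-2$ and injective for all $M$ when $l=d-1$; as $\mathbf{H}_l^{\mathbb{A}^1}(U,R),\mathbf{H}_l^{\mathbb{A}^1}(X,R)\in\ModA(R)$, Yoneda's lemma in $\ModA(R)$ then yields that $\alpha$ is an isomorphism, respectively an epimorphism, exactly as in the proof of Theorem~\ref{main1}. The hard part will be the coniveau vanishing $H^i_Z(X,M)=0$ for $i<d$ over an \emph{arbitrary} field and for a \emph{possibly singular} $Z$: smooth purity is unavailable since $Z$ need not be smooth, so I expect to need the coniveau spectral sequence together with the structure theory of strictly $\mathbb{A}^1$-invariant sheaves \cite{Mo2} to confine all contributions to codimension $\geq d$. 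A dual route would replace Lemma~\ref{key} by the $\mathbb{A}^1$-purity cofiber sequence $C^{\mathbb{A}^1}(U,R)\to C^{\mathbb{A}^1}(X,R)\to\tilde{C}^{\mathbb{A}^1}(X/U,R)$ and use Morel's connectivity theorem \cite{Mo2} to prove $\tilde{\mathbf{H}}_i^{\mathbb{A}^1}(X/U,R)=0$ for $i<d$, reading off the claim from the long exact homology sequence; this confronts the same singular-$Z$ difficulty.
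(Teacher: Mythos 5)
Your argument is correct and is essentially the paper's own proof: the same induction on $l$ with base case $l=0$ from Asok, propagation of the vanishing $\tilde{\mathbf{H}}_i^{\mathbb{A}^1}(U,R)=0$ for $i<l$ via the inductive hypothesis, dualization through Lemma~\ref{key} applied to both $U$ and $X$, and Yoneda's lemma in $\ModA(R)$ to descend from the cohomological statement back to the homology sheaves. The one step you flag as ``the hard part'' --- that $H^{l}_{Nis}(X,M)\to H^{l}_{Nis}(U,M)$ is an isomorphism for $l\le d-2$ and a monomorphism for $l=d-1$ when $M$ is strictly $\mathbb{A}^1$-invariant and $\codim_X(X\setminus U)\ge d$ --- is not reproved in the paper either but is cited directly as \cite[Lem.~6.4.4]{Mo1}, whose proof runs along exactly the Gersten/coniveau lines you sketch, so no new idea is required beyond locating that reference.
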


\begin{proof}
It suffices to show for the case $l \leq d-1$. We use induction on $l$. The case of $l=0$ is proved by \cite[Prop. 3.8]{As}. We suppose that the lemma holds for $l \geq 1$ and $\tilde{\mathbf{H}}_i^{\mathbb{A}^1}(X,R)=0$ for every $i < l+1$. We may assume $l+1 \leq d-1$, or equivalently, $l \leq d-2$. Then we have 
\begin{equation*}
\tilde{\mathbf{H}}_i^{\mathbb{A}^1}(U,R) \cong \tilde{\mathbf{H}}_i^{\mathbb{A}^1}(X,R) = 0
\end{equation*}
for every $i \leq l$ by the inductive hypothesis. Therefore, Lemma \ref{key} gives isomorphisms
\begin{align*}
H^{l+1}_{Nis}(X,M) \cong \Hom_{\ModA(R)}(\mathbf{H}_{l+1}^{\mathbb{A}^1}(X,R),M),\\
H^{l+1}_{Nis}(U,M) \cong \Hom_{\ModA(R)}(\mathbf{H}_{l+1}^{\mathbb{A}^1}(U,R),M),
\end{align*}
for every $M \in \ModA(R)$. On the other hand, \cite[Lem. 6.4.4]{Mo1} proves that the natural morphism
\begin{equation*}
H^{l+1}_{Nis}(X,M) \to H^{l+1}_{Nis}(U,M)
\end{equation*}
is an isomorphism for $l+1 \leq d-2$ and a monomorphism for $l+1 = d-1$. Thus, the conclusion follows from Yoneda's lemma in $\ModA(R)$.
\end{proof}

Next, we prove Theorem \ref{main2}.

\begin{proof}[Proof of Theorem \ref{main2}]
It suffices to consider the case $l \leq d-1$. We use induction on $n$. Since $U$ and $X$ are $\mathbb{A}^1$-connected, canonical morphism $\pi_{0}^{\mathbb{A}^1}(U) \to \pi_{0}^{\mathbb{A}^1}(X)$ is an isomorphism. By \cite[Thm. 5.35 and 4.46]{Mo2}, we have isomorphisms
\begin{align*}
\pi_{1}^{\mathbb{A}^1}(U) \cong \mathbf{H}_{1}^{\mathbb{A}^1}(U),\\
\pi_{1}^{\mathbb{A}^1}(X) \cong \mathbf{H}_{1}^{\mathbb{A}^1}(X).
\end{align*}
because $\pi_{1}^{\mathbb{A}^1}(U)$ and $\pi_{1}^{\mathbb{A}^1}(X)$ are abelian. Therefore, Lemma \ref{Hcodim} shows that
\begin{equation*}
\pi_{1}^{\mathbb{A}^1}(U) \cong \mathbf{H}_{1}^{\mathbb{A}^1}(U) \to \mathbf{H}_{1}^{\mathbb{A}^1}(X) \cong \pi_{1}^{\mathbb{A}^1}(X)
\end{equation*}
is an isomorphism when $1 \leq d-2$ and an epimorphism when $1 = d-1$. This proves the theorem for $n=0$. Next, we suppose that the theorem holds for $l \geq 1$ and $X$ is $\mathbb{A}^1$-$l$-connected. We may assume $l+1 \leq d-1$, or equivalently, $l \leq d-2$. Then $U$ is also $\mathbb{A}^1$-$l$-connected, because the inductive hypothesis gives an isomorphism
\begin{equation*}
\pi_{i}^{\mathbb{A}^1}(U) \cong \pi_{i}^{\mathbb{A}^1}(X) = 0
\end{equation*}
for every $i \leq l$. Therefore, we have $\tilde{\mathbf{H}}_i^{\mathbb{A}^1}(U)=\tilde{\mathbf{H}}_i^{\mathbb{A}^1}(X)=0$ for every $i < l+1$ and
\begin{align*}
\pi_{l+1}^{\mathbb{A}^1}(U) \cong \mathbf{H}_{l+1}^{\mathbb{A}^1}(U)\\
\pi_{l+1}^{\mathbb{A}^1}(X) \cong \mathbf{H}_{l+1}^{\mathbb{A}^1}(X)
\end{align*}
by Morel's $\mathbb{A}^1$-Hurewicz theorem \cite[Thm. 5.37]{Mo2}. Thus, by Lemma \ref{Hcodim} again, we see that
\begin{equation*}
\pi_{l+1}^{\mathbb{A}^1}(U) \cong \mathbf{H}_{l+1}^{\mathbb{A}^1}(U) \to \mathbf{H}_{l+1}^{\mathbb{A}^1}(X) \cong \pi_{l+1}^{\mathbb{A}^1}(X)
\end{equation*}
is an isomorphism when $1 \leq d-2$ and an epimorphism when $1 = d-1$.
\end{proof}

Lemma \ref{Hcodim} also has the following application.

\begin{prop}
Let $U$ be an open subvariety of a smooth proper variety $X$ whose complement has codimension $\geq 2$. Then $U$ is not $\mathbb{A}^1$-simply connected.
\end{prop}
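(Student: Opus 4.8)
The plan is to argue by contradiction. Suppose $U$ is $\mathbb{A}^1$-simply connected. Since $\pi_1^{\mathbb{A}^1}(U)$ is formed with respect to a basepoint $\Spec k \to U$, this point is in particular a $k$-rational point, so $U(k) \neq \emptyset$, and the $\mathbb{A}^1$-connectedness of $U$ gives $\tilde{\mathbf{H}}_0^{\mathbb{A}^1}(U) = 0$. The idea is to use Lemma \ref{Hcodim} to push the homological connectivity of $U$ onto $X$, feed the resulting vanishing into the Picard computation of Remark \ref{rempic}, and finally contradict the nonvanishing of $\mathrm{Pic}(X)$ that comes from properness. I may assume $\dim X \geq 1$, since otherwise $U = X = \Spec k$ and the statement has no content.

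First I would transfer the degree-zero connectivity. Applying Lemma \ref{Hcodim} with $l = 0$ (the hypothesis on $\tilde{\mathbf{H}}_i^{\mathbb{A}^1}(X)$ for $i < 0$ being vacuous and $d \geq 2$), the canonical map $\mathbf{H}_0^{\mathbb{A}^1}(U) \to \mathbf{H}_0^{\mathbb{A}^1}(X)$ is an isomorphism, so $\tilde{\mathbf{H}}_0^{\mathbb{A}^1}(X) = 0$. Next, since $\pi_1^{\mathbb{A}^1}(U) = 0$, Morel's $\mathbb{A}^1$-Hurewicz theorem \cite[Thm. 5.35]{Mo2} identifies $\mathbf{H}_1^{\mathbb{A}^1}(U)$ with the abelianization of $\pi_1^{\mathbb{A}^1}(U)$, whence $\mathbf{H}_1^{\mathbb{A}^1}(U) = 0$ (note that no abelianness assumption is needed here, as the group itself is trivial). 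I can then apply Lemma \ref{Hcodim} again with $l = 1$: the required vanishing $\tilde{\mathbf{H}}_0^{\mathbb{A}^1}(X) = 0$ was just established, so the map $\mathbf{H}_1^{\mathbb{A}^1}(U) \to \mathbf{H}_1^{\mathbb{A}^1}(X)$ is at least an epimorphism, forcing $\mathbf{H}_1^{\mathbb{A}^1}(X) = 0$.

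To finish, I would invoke Remark \ref{rempic}. Since $\tilde{\mathbf{H}}_0^{\mathbb{A}^1}(X) = 0$, it gives an isomorphism $\Hom_{\Ab}(\mathbf{H}_1^{\mathbb{A}^1}(X),\mathbb{G}_m) \cong \mathrm{Pic}(X)$, and the left-hand side vanishes because $\mathbf{H}_1^{\mathbb{A}^1}(X) = 0$. Therefore $\mathrm{Pic}(X) = 0$. On the other hand, a smooth proper variety of positive dimension has nontrivial Picard group, and this yields the contradiction.

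The main obstacle is precisely this last geometric input, $\mathrm{Pic}(X) \neq 0$, which is where properness of $X$ enters in an essential way: the open $U$ alone carries no such constraint, and it is exactly the invariance $\mathrm{Pic}(X) \cong \mathrm{Pic}(U)$ under removal of a closed subset of codimension $\geq 2$ that lets one trade $U$ for the proper $X$. When $X$ is projective the nonvanishing is immediate from an ample invertible sheaf defined over $k$; for a general smooth proper variety one must argue separately that the Picard group cannot vanish in positive dimension. Everything else is formal: the homotopy-theoretic steps are direct consequences of Lemma \ref{Hcodim}, the $\mathbb{A}^1$-Hurewicz theorem, and Remark \ref{rempic}, so the only substantive point to nail down is this Picard nonvanishing.
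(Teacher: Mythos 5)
Your proposal is correct and follows essentially the paper's own route: both arguments establish $\tilde{\mathbf{H}}_0^{\mathbb{A}^1}(X)=0$ by the degree-zero case of the excision lemma (Lemma \ref{Hcodim}, which is \cite[Prop. 3.8]{As}), use the $l=1$ case to compare $\mathbf{H}_1^{\mathbb{A}^1}(U)$ with $\mathbf{H}_1^{\mathbb{A}^1}(X)$, and conclude via Remark \ref{rempic} together with $\mathrm{Pic}(X)\neq 0$ --- the paper simply runs the implication contrapositively (from $\mathrm{Pic}(X)\neq 0$ it gets $\mathbf{H}_1^{\mathbb{A}^1}(X)\neq 0$ and pushes back along the epimorphism to $\mathbf{H}_1^{\mathbb{A}^1}(U)\neq 0$) rather than by contradiction, and cites \cite[Prop. 5.1.4]{AM} for exactly the Picard nonvanishing you flag as the remaining substantive point. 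Two cosmetic remarks: over a non-perfect field you should invoke Proposition \ref{Hure}(1) instead of \cite[Thm. 5.35]{Mo2} to get $\mathbf{H}_1^{\mathbb{A}^1}(U)=0$ from $\pi_1^{\mathbb{A}^1}(U)=0$, and your derivation of $\tilde{\mathbf{H}}_0^{\mathbb{A}^1}(X)=0$ directly from Lemma \ref{Hcodim} lets you skip the paper's separate case analysis via \cite[Thm. 4.14]{As} for $X$ not $\mathbb{A}^1$-connected.
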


\begin{proof}
When $U$ is not $\mathbb{A}^1$-connected, this is clear. When $X$ is not $\mathbb{A}^1$-connected, $\tilde{\mathbf{H}}_{0}^{\mathbb{A}^1}(X) \neq 0$ by \cite[Thm. 4.14]{As}. Thus, \cite[Prop. 3.8]{As} shows $\tilde{\mathbf{H}}_{0}^{\mathbb{A}^1}(U) \neq 0$. Therefore, $U$ is not $\mathbb{A}^1$-connected. We may assume that $U$ and $X$ are $\mathbb{A}^1$-connected. Then $\tilde{\mathbf{H}}_{0}^{\mathbb{A}^1}(X) \cong \tilde{\mathbf{H}}_{0}^{\mathbb{A}^1}(U) = 0$ by \cite[Prop. 3.8]{As}. Therefore, Lemma \ref{Hcodim} induces an epimorphism
\begin{equation}\label{eqexamp}
{\mathbf{H}}_{1}^{\mathbb{A}^1}(U) \to {\mathbf{H}}_{1}^{\mathbb{A}^1}(X).
\end{equation}
On the other hand, we have an isomorphism
\begin{equation*}
\Hom_{\Ab}({\mathbf{H}}_{1}^{\mathbb{A}^1}(X),\mathbb{G}_m) \cong \mathrm{Pic}(X)
\end{equation*}
by Remark \ref{rempic}. Since $\mathrm{Pic}(X) \neq 0$ (see \cite[Prop. 5.1.4]{AM}), we have ${\mathbf{H}}_{1}^{\mathbb{A}^1}(X) \neq 0$. Since the epimorphism \eqref{eqexamp} shows ${\mathbf{H}}_{1}^{\mathbb{A}^1}(U) \neq 0$, the open set $U$ is not $\mathbb{A}^1$-simply connected.
\end{proof}


\section{$\mathbb{A}^1$-abelianization and the Hurewicz theorem}\label{adj-inc}

In this section, we consider  $\mathbb{A}^1$-Hurewicz theorem and Theorem \ref{main1} over a not perfect field. Since Morel's $\mathbb{A}^1$-Hurewicz theorem is only proved over perfect fields, so is Theorem \ref{main1}. We prove a generalization of the $\mathbb{A}^1$-Hurewicz theorem over an arbitrary field. Next, we prove a weaker version of Theorem \ref{main1} over an arbitrary field via the generalized $\mathbb{A}^1$-Hurewicz theorem. For this, we first construct a left adjoint functor of injections $\ModA(R) \hookrightarrow \Mod(R)$ and $\AbA \hookrightarrow \mathcal{G}r_k$.

\subsection{Left adjoint of $\ModA(R) \hookrightarrow \Mod(R)$ and $\mathbb{A}^1$-abelianization}

Let $M$ be a Nisnevich sheaf of $R$-modules. Then we write $M_{\mathbb{A}^1}$ for the strictly $\mathbb{A}^1$-invariant sheaf $H_0(L_{\mathbb{A}^1}(M))$.

\begin{lem}\label{adj:ModA-in-Mod}
The functor $(-)_{\mathbb{A}^1}:\Mod(R) \to \ModA(R)$ is left adjoint to the inclusion $\ModA(R) \hookrightarrow \Mod(R)$.
\end{lem}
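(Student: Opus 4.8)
The plan is to produce the desired adjunction as a natural isomorphism of Hom-bifunctors, obtained by concatenating the adjunction of the $\mathbb{A}^1$-localization functor $L_{\mathbb{A}^1}$ (Theorem \ref{A1-localization}) with the truncation adjunction of Lemma \ref{AsH0}, once the relevant derived Hom-groups have been identified with ordinary sheaf Hom-groups. Fix $M \in \Mod(R)$ and $N \in \ModA(R)$; the goal is a natural isomorphism
\begin{equation*}
\Hom_{\ModA(R)}(M_{\mathbb{A}^1},N) \cong \Hom_{\Mod(R)}(M,N).
\end{equation*}

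First I would record two facts. Since $N$ is strictly $\mathbb{A}^1$-invariant, by definition the complex $N$ concentrated in degree zero is $\mathbb{A}^1$-local, hence an object of $\mathcal{D}_{\bullet}(\Mod(R))^{\mathbb{A}^1-loc}$. Second, because $M$ is concentrated in degree zero, the complex $L_{\mathbb{A}^1}(M)$ is connective, i.e. its homology vanishes in negative degrees, so it lies in $\mathcal{D}_{\geq 0}(\Mod(R))$; this right t-exactness of $L_{\mathbb{A}^1}$ for the homotopy t-structure is Morel's connectivity theorem \cite{Mo2} (compare the degreewise vanishing \cite[Thm. 5.22 and Cor. 5.23]{Mo2} used earlier for $\mathbb{A}^1$-chain complexes). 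This second point is the step I expect to be the main obstacle, since it is the only non-formal input: everything else is a manipulation of adjunctions and full-subcategory identifications, whereas connectivity genuinely requires invoking Morel's theory rather than pure category theory.

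With these in hand I would compute, viewing $N$ as a degree-zero complex,
\begin{equation*}
\Hom_{\Mod(R)}(M,N) \cong \Hom_{\mathcal{D}_{\bullet}(\Mod(R))}(M,N) \cong \Hom_{\mathcal{D}_{\bullet}(\Mod(R))}(L_{\mathbb{A}^1}(M),N).
\end{equation*}
The first isomorphism holds because $\Hom_{\mathcal{D}_{\bullet}}$ between two objects concentrated in degree zero recovers the ordinary sheaf Hom, and the second is precisely the adjunction of Theorem \ref{A1-localization} applied to the $\mathbb{A}^1$-local complex $N$. Next, since $L_{\mathbb{A}^1}(M) \in \mathcal{D}_{\geq 0}(\Mod(R))$ and $\mathcal{D}_{\geq 0}$ is full in $\mathcal{D}_{\bullet}$, Lemma \ref{AsH0} with $\mathscr{A}=\Mod(R)$ yields
\begin{equation*}
\Hom_{\mathcal{D}_{\bullet}(\Mod(R))}(L_{\mathbb{A}^1}(M),N) \cong \Hom_{\Mod(R)}(H_0(L_{\mathbb{A}^1}(M)),N) = \Hom_{\ModA(R)}(M_{\mathbb{A}^1},N),
\end{equation*}
where the last equality uses the definition $M_{\mathbb{A}^1}=H_0(L_{\mathbb{A}^1}(M))$ together with the fullness of $\ModA(R) \hookrightarrow \Mod(R)$, both $M_{\mathbb{A}^1}$ and $N$ being strictly $\mathbb{A}^1$-invariant. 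Concatenating the displayed isomorphisms gives the desired bijection. Finally I would check naturality: each constituent isomorphism (the two adjunction isomorphisms and the fully-faithful identifications of Hom-groups) is natural in both $M$ and $N$, so the composite is a natural isomorphism of bifunctors, which is exactly the asserted adjunction.
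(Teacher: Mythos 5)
Your proof is correct and takes essentially the same route as the paper: the identical chain of isomorphisms obtained by concatenating the $\mathbb{A}^1$-localization adjunction of Theorem \ref{A1-localization} with the truncation adjunction of Lemma \ref{AsH0}, together with the identification of derived Hom-groups between degree-zero complexes with ordinary sheaf Hom-groups. The only difference is that you make explicit the connectivity of $L_{\mathbb{A}^1}(M)$ (needed to place it in $\mathcal{D}_{\geq 0}(\Mod(R))$ before invoking Lemma \ref{AsH0}), a point the paper's proof uses tacitly, so if anything your write-up is slightly more careful.
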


\begin{proof}
For $M \in \Mod(R)$ and $N \in \Mod(R)$, Lemma \ref{AsH0} gives an isomorphism
\begin{align*}
\Hom_{\ModA(R)}(M_{\mathbb{A}^1},N) &= \Hom_{\Mod(R)}(H_0(L_{\mathbb{A}^1}(M)),N) \\
&\cong \Hom_{\mathcal{D}_{\bullet}(\Mod(R))}(L_{\mathbb{A}^1}(M),N) \\
&= \Hom_{\mathcal{D}_{\bullet}(\Mod(R))^{\mathbb{A}^1-loc}}(L_{\mathbb{A}^1}(M),N).
\end{align*}
By Theorem \ref{A1-localization}, we have
\begin{align*}
\Hom_{\mathcal{D}_{\bullet}(\Mod(R))^{\mathbb{A}^1-loc}}(L_{\mathbb{A}^1}(M),N) &\cong \Hom_{\mathcal{D}_{\bullet}(\Mod(R))}(M,N) \\
&= \Hom_{\Mod(R)}(M,N).
\end{align*}
\end{proof}

For integral coefficients, the adjunction
\begin{equation*}
\Hom_{\ModA(R)}(M_{\mathbb{A}^1},N) \cong \Hom_{\Mod(R)}(M,N)
\end{equation*}
of Lemma \ref{adj:ModA-in-Mod} comes from a natural morphism $M \to M_{\mathbb{A}^1}$ in $\Mod(R)$. Indeed, Morel's construction of the $\mathbb{A}^1$-localization functor comes from an endofunctor $L_{\mathbb{A}^1}^{ab}$ of $\mathcal{C}_{\bullet}(\Ab)$ and a natural transformation $\theta^{ab}:\mathrm{id} \to L_{\mathbb{A}^1}^{ab}$ (see \cite[Lem. 5.18 and Cor. 5.19]{Mo2}). Then the morphism $\theta^{ab}_M:M \to L_{\mathbb{A}^1}^{ab}(M)$ induces a morphism
\begin{equation}\label{MMA1}
M = H_0(M) \to H_0(L_{\mathbb{A}^1}^{ab}(M)) = M_{\mathbb{A}^1}.
\end{equation}
The morphism $\Hom_{\ModA(R)}(M_{\mathbb{A}^1},N) \to \Hom_{\Mod(R)}(M,N)$ induced by \eqref{MMA1} coincides with our adjunction.

We give an $\mathbb{A}^1$-analogue of the abelianization in ordinary group theory. This gives a left adjoint functor of $\AbA \hookrightarrow \mathcal{G}r_k^{\mathbb{A}^1}$.

\begin{defi}
Let $\mathcal{G}r_k$ be the category of Nisnevich sheaves of groups. For $G \in \mathcal{G}r_k$, we denote $G^{ab}$ for its abelianization sheaf. We call the strictly $\mathbb{A}^1$-invariant sheaf $G^{ab}_{\mathbb{A}^1} = (G^{ab})_{\mathbb{A}^1}$ the \textit{$\mathbb{A}^1$-abelianization of $G$}.
\end{defi}

\begin{lem}\label{A1-ab}
The functor $\mathcal{G}r_k \to \AbA ; G \mapsto G^{ab}_{\mathbb{A}^1}$ is left adjoint to $\AbA \hookrightarrow \mathcal{G}r_k$. Moreover, this adjunction induces a left adjoint functor of $\AbA \hookrightarrow \mathcal{G}r_k^{\mathbb{A}^1}$.
\end{lem}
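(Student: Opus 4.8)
The plan is to exhibit the functor $G \mapsto G^{ab}_{\mathbb{A}^1}$ as the composite $(-)_{\mathbb{A}^1} \circ (-)^{ab}$ of two left adjoints and to invoke the fact that a composite of left adjoints is left adjoint to the composite of the corresponding right adjoints. The inclusion $\AbA \hookrightarrow \mathcal{G}r_k$ factors as $\AbA \hookrightarrow \Ab \hookrightarrow \mathcal{G}r_k$, where the first arrow is the inclusion of strictly $\mathbb{A}^1$-invariant abelian sheaves into all abelian sheaves and the second is the inclusion of abelian sheaves into sheaves of groups. It therefore suffices to produce a left adjoint for each of these two inclusions and to compose them.

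First I would recall the sheaf-theoretic abelianization: the functor $(-)^{ab}: \mathcal{G}r_k \to \Ab$, sending $G$ to the quotient sheaf $G/[G,G]$, is left adjoint to $\Ab \hookrightarrow \mathcal{G}r_k$. Concretely, for every abelian sheaf $A$ any morphism $G \to A$ in $\mathcal{G}r_k$ kills the commutator subsheaf and hence factors uniquely through $G^{ab}$, giving a natural isomorphism $\Hom_{\mathcal{G}r_k}(G,A) \cong \Hom_{\Ab}(G^{ab},A)$. Second, Lemma \ref{adj:ModA-in-Mod} with $R = \mathbb{Z}$ provides the left adjoint $(-)_{\mathbb{A}^1} = H_0(L_{\mathbb{A}^1}(-))$ of the inclusion $\AbA \hookrightarrow \Ab$. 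Composing the two adjunctions yields, for $A \in \AbA$, a natural isomorphism
\begin{equation*}
\Hom_{\AbA}(G^{ab}_{\mathbb{A}^1},A) \cong \Hom_{\Ab}(G^{ab},A) \cong \Hom_{\mathcal{G}r_k}(G,A),
\end{equation*}
which is exactly the claimed adjunction; note that $G^{ab}_{\mathbb{A}^1} = H_0(L_{\mathbb{A}^1}(G^{ab}))$ lies in $\AbA$ by construction, so the functor has the right target.

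For the final assertion, I would first observe that $\AbA$ is a full subcategory of $\mathcal{G}r_k^{\mathbb{A}^1}$: a strictly $\mathbb{A}^1$-invariant abelian sheaf is in particular strongly $\mathbb{A}^1$-invariant, since the cohomology isomorphism demanded in degrees $i \in \{0,1\}$ is a special case of the one demanded in all degrees. Restricting the functor $(-)^{ab}_{\mathbb{A}^1}$ along the fully faithful inclusion $\mathcal{G}r_k^{\mathbb{A}^1} \hookrightarrow \mathcal{G}r_k$ gives a functor $\mathcal{G}r_k^{\mathbb{A}^1} \to \AbA$, and fullness yields $\Hom_{\mathcal{G}r_k^{\mathbb{A}^1}}(G,A) = \Hom_{\mathcal{G}r_k}(G,A)$ for $G \in \mathcal{G}r_k^{\mathbb{A}^1}$ and $A \in \AbA$; composing with the isomorphism above shows this restriction is left adjoint to $\AbA \hookrightarrow \mathcal{G}r_k^{\mathbb{A}^1}$.

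The only genuinely non-formal step is the abelianization adjunction itself, which must be verified in the topos of Nisnevich sheaves rather than sectionwise: one takes the sheafification of the presheaf $U \mapsto G(U)^{ab}$ and checks the universal property against abelian sheaves. I expect this to be the main---though standard---point; the remainder is the formal composition of adjunctions together with the full faithfulness of the inclusions involved.
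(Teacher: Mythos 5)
Your proposal is correct and follows essentially the same route as the paper: the paper's proof is exactly the one-line composition of the abelianization adjunction $\Ab \hookrightarrow \mathcal{G}r_k$ with the adjunction of Lemma \ref{adj:ModA-in-Mod} at $R = \mathbb{Z}$. In fact you are more thorough than the paper, which leaves the ``moreover'' clause implicit, whereas you correctly justify it by noting that strict $\mathbb{A}^1$-invariance implies strong $\mathbb{A}^1$-invariance (hypercohomology of a sheaf placed in degree zero is ordinary cohomology, and degrees $0,1$ are a special case) and then restricting the adjunction along the full inclusion $\mathcal{G}r_k^{\mathbb{A}^1} \hookrightarrow \mathcal{G}r_k$.
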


\begin{proof}
Since $G \mapsto G^{ab}$ gives a left adjoint of $\Ab \hookrightarrow \mathcal{G}r_k$, the composite functor $\mathcal{G}r_k \to \Ab \to \AbA$ is left adjoint to $\AbA \hookrightarrow \mathcal{G}r_k$.
\end{proof}

\subsection{$\mathbb{A}^1$-Hurewicz theorem and Theorem \ref{main1} over an arbitrary field}

By using Lemma \ref{adj:ModA-in-Mod} and $\mathbb{A}^1$-abelianization, we can obtain a generalization of Morel's $\mathbb{A}^1$-Hurewicz theorem \cite[Thm. 5.35 and 5.37]{Mo2} for a not necessary perfect field $k$:

\begin{prop}\label{Hure}
Let $\mathcal{X}$ be a pointed $k$-space over an arbitrary field $k$.
\begin{itemize}
\item[(1)] If $\mathcal{X}$ is $\mathbb{A}^1$-connected, then there exists a natural isomorphism
\begin{equation*}
\pi_{1}^{\mathbb{A}^1}(\mathcal{X})^{ab}_{\mathbb{A}^1} \cong \mathbf{H}_1^{\mathbb{A}^1}(\mathcal{X}).
\end{equation*}
\item[(2)] If $\mathcal{X}$ is $\mathbb{A}^1$-$(m-1)$-connected for $m \geq 2$, then $\tilde{\mathbf{H}}_{i}^{\mathbb{A}^1}(\mathcal{X}) = 0$ for all $i \leq m-1$ and there exists a natural isomorphism
\begin{equation*}
\pi_{m}^{\mathbb{A}^1}(\mathcal{X})_{\mathbb{A}^1} \cong \mathbf{H}_m^{\mathbb{A}^1}(\mathcal{X}).
\end{equation*}
\end{itemize}
\end{prop}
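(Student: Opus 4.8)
The plan is to prove Proposition \ref{Hure} by reducing it to Morel's $\mathbb{A}^1$-Hurewicz theorem \cite[Thm. 5.35 and 5.37]{Mo2} over the perfect closure $k^{1/p^{\infty}}$ (or the separable closure in characteristic zero) and then descending back to $k$. The key observation is that Morel's theorem is stated only for perfect fields, but the $\mathbb{A}^1$-homotopy and $\mathbb{A}^1$-homology sheaves, together with the abelianization and $\mathbb{A}^1$-localization functors, behave well under base change to the perfect closure. First I would recall that for a not-necessarily-perfect field $k$, the perfect closure $k_{\mathrm{perf}}$ gives a base-change functor on $k$-spaces, and that this functor commutes with the formation of $\mathbb{A}^1$-homotopy sheaves, $\mathbb{A}^1$-homology sheaves, and the localization functor $L_{\mathbb{A}^1}$ up to the appropriate identifications.

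For part (1), I would first invoke Morel's theorem over $k_{\mathrm{perf}}$ to obtain that the base-changed fundamental sheaf's abelianization is isomorphic to the first $\mathbb{A}^1$-homology sheaf. The new ingredient here is the functor $(-)^{ab}_{\mathbb{A}^1}$ constructed in Lemma \ref{A1-ab}: I would show that the natural morphism $\pi_1^{\mathbb{A}^1}(\mathcal{X})^{ab}_{\mathbb{A}^1} \to \mathbf{H}_1^{\mathbb{A}^1}(\mathcal{X})$ induced by the $\mathbb{A}^1$-Hurewicz map becomes an isomorphism after base change to $k_{\mathrm{perf}}$, and then argue that an isomorphism after this faithfully flat base change implies an isomorphism over $k$ itself, using that strictly $\mathbb{A}^1$-invariant sheaves (and the relevant $\mathrm{Ext}$-vanishing) can be checked on points with values in finitely generated separable field extensions. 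For part (2), the structure is parallel but slightly simpler since $\pi_m^{\mathbb{A}^1}(\mathcal{X})$ is already abelian for $m \geq 2$: here I would use the functor $(-)_{\mathbb{A}^1}$ from Lemma \ref{adj:ModA-in-Mod} in place of the abelianization, together with the vanishing of the reduced homology sheaves below degree $m$, which again follows by descent from Morel's statement over $k_{\mathrm{perf}}$.

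The main obstacle I expect is establishing that the relevant constructions genuinely commute with base change to the perfect closure and that isomorphisms descend. In particular, one must verify that $L_{\mathbb{A}^1}$ and the truncation $H_0$ defining $(-)_{\mathbb{A}^1}$ are compatible with pullback along $\Spec k_{\mathrm{perf}} \to \Spec k$, and that the Nisnevich sheaf-theoretic abelianization commutes with this base change as well. The subtle point is that $k_{\mathrm{perf}}$ is typically not of finite type over $k$, so one should present it as a filtered colimit of finite purely inseparable extensions and pass to the limit, checking that $\mathbb{A}^1$-homotopy and $\mathbb{A}^1$-homology sheaves commute with such filtered colimits of base fields. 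Once the base-change compatibility is in place, the descent step is comparatively formal: a morphism of strictly $\mathbb{A}^1$-invariant sheaves over $k$ that becomes an isomorphism over $k_{\mathrm{perf}}$ is already an isomorphism, because the sections of such sheaves are determined by their values on smooth $k$-schemes and these are unaffected by the purely inseparable extension at the level of stalks.

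I would organize the argument so that the base-change lemma is isolated as the technical heart, and then deduce both (1) and (2) as corollaries by feeding Morel's perfect-field results through it. An alternative route, should the direct base-change argument prove delicate, is to reprove the Hurewicz isomorphism intrinsically over $k$ by combining the adjunctions of Lemma \ref{adj:ModA-in-Mod} and Lemma \ref{A1-ab} with a comparison of the universal properties of $\pi_m^{\mathbb{A}^1}(\mathcal{X})_{\mathbb{A}^1}$ and $\mathbf{H}_m^{\mathbb{A}^1}(\mathcal{X})$ as objects corepresenting the same functor on $\ModA(\mathbb{Z})$; but I expect the descent approach to be cleaner given that the perfect-field case is already available from Morel.
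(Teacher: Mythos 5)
Your primary route has a genuine gap at its technical heart: base change along $\Spec k_{\mathrm{perf}} \to \Spec k$ is not known to commute, integrally, with any of the constructions you need, and the standard tools do not cover it. A finite purely inseparable extension $k'/k$ is not essentially smooth (e.g.\ $k'=k[x]/(x^p-a)$ has $\Omega_{k'/k}\neq 0$), so the essentially-smooth base change theorems for $\mathbb{A}^1$-homotopy sheaves do not apply to the filtered colimit presenting $k_{\mathrm{perf}}$; compatibility of $\mathbb{A}^1$-localization with passage to the perfection is known (Elmanto--Khan, \emph{Perfection in motivic homotopy theory}) only after inverting the exponential characteristic $p$. Since the proposition is vacuous in characteristic zero (such $k$ is already perfect), the only case at stake is characteristic $p$, exactly where your descent is blind to $p$-torsion in $\pi_m^{\mathbb{A}^1}$ and $\mathbf{H}_m^{\mathbb{A}^1}$. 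Worse, you cannot even launch the argument: to apply Morel's theorem over $k_{\mathrm{perf}}$ you must know $\mathcal{X}_{k_{\mathrm{perf}}}$ is $\mathbb{A}^1$-$(m-1)$-connected, which already presupposes that base change commutes with $\mathrm{Ex}_{\mathbb{A}^1}$, i.e.\ the very compatibility in question. The closing descent step is not formal either: for a sheaf on $\Smk$, sections over $X\times_k k'$ do not even make sense ($X\times_k k'$ is not smooth over $k$), so ``isomorphism after base change implies isomorphism'' needs conservativity of the pullback on strictly $\mathbb{A}^1$-invariant sheaves over an \emph{imperfect} field, and your justification via values on finitely generated separable extensions is itself a perfect-field result of Morel --- one of the inputs you are trying to avoid.

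By contrast, the alternative you relegate to your final sentence is, in substance, the paper's actual proof, and it is complete and entirely intrinsic to $k$: apply the classical simplicial Hurewicz theorem stalkwise to $\mathrm{Ex}_{\mathbb{A}^1}(\mathcal{X})$ (whose stalks are $(m-1)$-connected simplicial sets) to get the vanishing of low-degree simplicial homology sheaves and an isomorphism $\pi_m^{\mathbb{A}^1}(\mathcal{X})^{ab}\cong \mathbf{H}_m(\mathrm{Ex}_{\mathbb{A}^1}(\mathcal{X}))$; then, for each strictly $\mathbb{A}^1$-invariant $M$, run the computation from the proof of Lemma \ref{key} \emph{without} $\mathbb{A}^1$-localizing to identify $\Hom_{\Ab}(\mathbf{H}_m(\mathrm{Ex}_{\mathbb{A}^1}(\mathcal{X})),M)$ with $[\mathcal{X},K(M,m)]_s$, use Lemma \ref{key} together with an induction giving $\tilde{\mathbf{H}}_i^{\mathbb{A}^1}(\mathcal{X})=0$ for $i\leq m-1$ to identify $[\mathcal{X},K(M,m)]_s$ with $\Hom_{\AbA}(\mathbf{H}_m^{\mathbb{A}^1}(\mathcal{X}),M)$, rewrite the other side as $\Hom_{\AbA}(\pi_m^{\mathbb{A}^1}(\mathcal{X})^{ab}_{\mathbb{A}^1},M)$ via Lemmas \ref{adj:ModA-in-Mod} and \ref{A1-ab}, and conclude by Yoneda in $\AbA$, since both sheaves corepresent the same functor. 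You should promote this to the main argument and drop the perfect-closure passage altogether.
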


By Lemma \ref{A1-ab}, the natural composite morphism $G \to G^{ab} \to G^{ab}_{\mathbb{A}^1}$ for $G \in \mathcal{G}r_k$ is an initial morphism in the sens of \cite[Thm. 5.35]{Mo2}. Moreover, by \cite[Cor. 5.2]{Mo2}, $\pi_{m}^{\mathbb{A}^1}(\mathcal{X})_{\mathbb{A}^1} \cong \pi_{m}^{\mathbb{A}^1}(\mathcal{X})$ for all $m \geq 2$ when $k$ is perfect. Therefore, Proposition \ref{Hure} is a generalization of Morel's $\mathbb{A}^1$-Hurewicz theorem.

\begin{proof}[Proof of Proposition \ref{Hure}]
Let $\mathcal{X}$ be an $\mathbb{A}^1$-$(m-1)$-connected pointed $k$-space for $m \geq 1$. Applying the Hurewicz theorem for simplicial sets for all stalks of $\pi_{i}^{\mathbb{A}^1}(\mathcal{X})=\pi_i(\mathrm{Ex}_{\mathbb{A}}^1(\mathcal{X}))$ and $\mathbf{H}_i(\mathrm{Ex}_{\mathbb{A}^1}(\mathcal{X})) = H_i(C(\mathrm{Ex}_{\mathbb{A}^1}(\mathcal{X})))$, we have
\begin{equation*}
\Ker(\mathbf{H}_i(\mathrm{Ex}_{\mathbb{A}^1}(\mathcal{X})) \to \mathbf{H}_i(\Spec k))=0
\end{equation*}
for all $i \leq m-1$ and $\pi_{m}^{\mathbb{A}^1}(\mathcal{X})^{ab} \cong \mathbf{H}_m(\mathrm{Ex}_{\mathbb{A}^1}(\mathcal{X}))$. Therefore, we obtain
\begin{equation}\label{Hureeq1}
\Hom_{\mathcal{G}r_k}(\pi_{m}^{\mathbb{A}^1}(\mathcal{X})^{ab},M) \cong \Hom_{\Ab}(\mathbf{H}_m(\mathrm{Ex}_{\mathbb{A}^1}(\mathcal{X}))),M)
\end{equation}
for every strictly $\mathbb{A}^1$-invariant sheaf $M$. By the same argument of the proof of Lemma \ref{key} (not taking the $\mathbb{A}^1$-localization of $C(\mathrm{Ex}_{\mathbb{A}^1}(\mathcal{X}))$), the right-hand side of \eqref{Hureeq1} coincides with $[\mathrm{Ex}_{\mathbb{A}^1}(\mathcal{X}),K(M,m)]_s$. Since the natural morphism $\mathcal{X} \to \mathrm{Ex}_{\mathbb{A}^1}(\mathcal{X})$ is an $\mathbb{A}^1$-weak equivalence and $K(M,m)$ is $\mathbb{A}^1$-local, we also have $[\mathrm{Ex}_{\mathbb{A}^1}(\mathcal{X}),K(M,m)]_s \cong [\mathcal{X},K(M,m)]_s$. On the other hand, by Lemma \ref{adj:ModA-in-Mod}, the left-hand side of \eqref{Hureeq1} coincides with $\Hom_{\AbA}(\pi_{m}^{\mathbb{A}^1}(\mathcal{X})^{ab}_{\mathbb{A}^1},M)$. Therefore, we have
\begin{equation}\label{Hureeq2}
\Hom_{\AbA}(\pi_{m}^{\mathbb{A}^1}(\mathcal{X})^{ab}_{\mathbb{A}^1},M) \cong [\mathcal{X},K(M,m)]_s.
\end{equation}
Since $\tilde{\mathbf{H}}_{0}^{\mathbb{A}^1}(\mathcal{X}) = 0$, the proof of the Lemma \ref{key} shows
\begin{equation*}
[\mathcal{X},K(M,1)]_s \cong \Hom_{\AbA}(\mathbf{H}_1^{\mathbb{A}^1}(\mathcal{X}),M)
\end{equation*}
and \eqref{Hureeq2} induces
\begin{equation*}
\Hom_{\AbA}(\pi_{1}^{\mathbb{A}^1}(\mathcal{X})^{ab}_{\mathbb{A}^1},M) \cong \Hom_{\AbA}(\mathbf{H}_1^{\mathbb{A}^1}(\mathcal{X}),M).
\end{equation*}
Therefore, Yoneda's lemma in $\AbA$ proves (1). Assume $m\geq2$. Then $\pi_{m}^{\mathbb{A}^1}(\mathcal{X})^{ab}_{\mathbb{A}^1}=\pi_{m}^{\mathbb{A}^1}(\mathcal{X})_{\mathbb{A}^1}$. Therefore, for proving (2), we only need to show that
\begin{equation}\label{Hureeq3}
\pi_{m}^{\mathbb{A}^1}(\mathcal{X})^{ab}_{\mathbb{A}^1} \cong \mathbf{H}_m^{\mathbb{A}^1}(\mathcal{X})
\end{equation}
by using induction on $m$. By the inductive hypothesis, we may assume $\tilde{\mathbf{H}}_{i}^{\mathbb{A}^1}(\mathcal{X}) = 0$ for every $i \leq m-1$ because $\tilde{\mathbf{H}}_{m-1}^{\mathbb{A}^1}(\mathcal{X}) \cong \pi_{m-1}^{\mathbb{A}^1}(\mathcal{X})_{\mathbb{A}^1}=0$. Then \eqref{Hureeq2} also shows
\begin{equation*}
\Hom_{\AbA}(\pi_{m}^{\mathbb{A}^1}(\mathcal{X})^{ab}_{\mathbb{A}^1},M) \cong \Hom_{\AbA}(\mathbf{H}_m^{\mathbb{A}^1}(\mathcal{X}),M).
\end{equation*}
Therefore, Yoneda's lemma in $\AbA$ also proves \eqref{Hureeq3}.
\end{proof}

The following is a weaker version of Theorem \ref{main1}.

\begin{thm}\label{Finalcor}
Let $X$ be a pointed smooth $k$-scheme of dimension $n \geq 1$. If $X$ is $\mathbb{A}^1$-$m$-connected for $m \geq n$, then $\pi_{m+1}^{\mathbb{A}^1}(X)_{\mathbb{A}^1} = 0$.
\end{thm}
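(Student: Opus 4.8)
The plan is to run exactly the same argument as in the proof of Theorem \ref{main1}, but with every occurrence of $\pi^{\mathbb{A}^1}_{m+1}(X)$ replaced by its $\mathbb{A}^1$-localization $\pi^{\mathbb{A}^1}_{m+1}(X)_{\mathbb{A}^1}$, using Proposition \ref{Hure} in place of Morel's Hurewicz theorem so that no perfectness hypothesis is needed. First, since $X$ is $\mathbb{A}^1$-$m$-connected with $m \geq n \geq 1$, in particular $m \geq 2$ as soon as $n \geq 2$; for the base case $n=1$ one has $m \geq 1$ and the relevant Hurewicz statement is part (1) of Proposition \ref{Hure}. In either case, part (2) of Proposition \ref{Hure} (applied with the connectivity index equal to $m+1$, noting $\mathcal{X} = X$ is $\mathbb{A}^1$-$m$-connected, i.e. $\mathbb{A}^1$-$((m+1)-1)$-connected) gives $\tilde{\mathbf{H}}^{\mathbb{A}^1}_i(X) = 0$ for all $i \leq m$ together with a natural isomorphism
\begin{equation*}
\pi^{\mathbb{A}^1}_{m+1}(X)_{\mathbb{A}^1} \cong \mathbf{H}^{\mathbb{A}^1}_{m+1}(X).
\end{equation*}

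Next I would invoke Lemma \ref{key} with $R = \mathbb{Z}$ and the vanishing $\tilde{\mathbf{H}}^{\mathbb{A}^1}_i(X) = 0$ for $i < m+1$ just established, to obtain for every $M \in \AbA$ a natural isomorphism
\begin{equation*}
\Hom_{\AbA}(\mathbf{H}^{\mathbb{A}^1}_{m+1}(X),M) \cong H^{m+1}_{Nis}(X,M).
\end{equation*}
Combining this with the Hurewicz isomorphism above yields $\Hom_{\AbA}(\pi^{\mathbb{A}^1}_{m+1}(X)_{\mathbb{A}^1},M) \cong H^{m+1}_{Nis}(X,M)$. The cohomological dimension bound $H^i_{Nis}(X,M) = 0$ for $i > \dim X = n$ then forces the right-hand side to vanish, since $m+1 \geq n+1 > n$.

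Finally, $\mathbf{H}^{\mathbb{A}^1}_{m+1}(X)$ is strictly $\mathbb{A}^1$-invariant by \cite[Thm. 5.22 and Cor. 5.23]{Mo2}, hence so is the isomorphic object $\pi^{\mathbb{A}^1}_{m+1}(X)_{\mathbb{A}^1} \in \AbA$, and the vanishing of $\Hom_{\AbA}(\pi^{\mathbb{A}^1}_{m+1}(X)_{\mathbb{A}^1},M)$ for all $M \in \AbA$ gives $\pi^{\mathbb{A}^1}_{m+1}(X)_{\mathbb{A}^1} = 0$ by Yoneda's lemma in $\AbA$ (taking $M = \pi^{\mathbb{A}^1}_{m+1}(X)_{\mathbb{A}^1}$ and examining the identity map). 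The reason this version is only a weaker form of Theorem \ref{main1} is the crux of the matter: over a non-perfect field one can no longer conclude that $\pi^{\mathbb{A}^1}_{m+1}(X)$ itself is strictly $\mathbb{A}^1$-invariant, so killing the localization $\pi^{\mathbb{A}^1}_{m+1}(X)_{\mathbb{A}^1}$ does not directly kill $\pi^{\mathbb{A}^1}_{m+1}(X)$; consequently one cannot feed the vanishing back into the $\mathbb{A}^1$-Whitehead theorem to iterate and deduce full $\mathbb{A}^1$-contractibility. Thus the main obstacle is not in the argument above, which is routine once Proposition \ref{Hure} is in hand, but in the fundamental gap between $\pi^{\mathbb{A}^1}_{m+1}(X)$ and $\pi^{\mathbb{A}^1}_{m+1}(X)_{\mathbb{A}^1}$ that obstructs upgrading this statement to the perfect-field conclusion.
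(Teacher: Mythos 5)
Your proposal is correct and is essentially identical to the paper's own proof: both combine Proposition \ref{Hure}(2) applied with connectivity index $m+1$, Lemma \ref{key} with $R=\mathbb{Z}$, the Nisnevich cohomological vanishing $H^i_{Nis}(X,M)=0$ for $i>\dim X$, and Yoneda's lemma in $\AbA$, and your closing remark about the gap between $\pi_{m+1}^{\mathbb{A}^1}(X)$ and $\pi_{m+1}^{\mathbb{A}^1}(X)_{\mathbb{A}^1}$ over non-perfect fields correctly explains why the statement is weaker than Theorem \ref{main1}. One superfluous step: your appeal to part (1) of Proposition \ref{Hure} for a ``base case $n=1$'' is unnecessary, since $m \geq n \geq 1$ forces $m+1 \geq 2$, so part (2) applies uniformly --- which is in fact what you then do.
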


\begin{proof}
We can perform the same argument of the proof of Theorem \ref{main1} replacing $\pi_{m+1}^{\mathbb{A}^1}(X)$ to $\pi_{m+1}^{\mathbb{A}^1}(X)_{\mathbb{A}^1}$. Indeed, Proposition \ref{Hure} and Lemma \ref{key} show
\begin{equation*}
\Hom_{\AbA}(\pi_{m+1}^{\mathbb{A}^1}(X)_{\mathbb{A}^1},M) \cong H^{m+1}_{Nis}(X,M)
\end{equation*}
for every $M \in \AbA$. By \cite[Thm. 1.32]{Nis}, the right-hand side vanishes. Since $\pi_{m+1}^{\mathbb{A}^1}(X)_{\mathbb{A}^1} \in \AbA$, Yoneda's lemma in $\AbA$ shows $\pi_{m+1}^{\mathbb{A}^1}(X)_{\mathbb{A}^1} = 0$.
\end{proof}



\begin{thebibliography}{99}
  \bibitem[As]{As} A. Asok, \textit{Birational invariants and $\mathbb{A}^1$-connectedness}, J. Reine Angew. Math., \textbf{681} (2012) 39-64.
  \bibitem[AD]{AD} A. Asok and B. Doran, \textit{$\mathbb{A}^1$-homotopy groups, excision, and solvable quotients}, Adv. Math. \textbf{221}(4) (2009) 1144-1190.
  \bibitem[AM]{AM} A. Asok and F. Morel, \textit{Smooth varieties up to $\mathbb{A}^1$-homotopy and algebraic h-cobordisms}, Adv. Math. \textbf{227}(5) (2011) 1990-2058.
  \bibitem[CD]{CD12} D.-C. Cisinski and F. D\'{e}glise, \textit{Mixed Weil cohomologies}, Adv. Math., \textbf{230}(1) (2012) 55-130. 
  \bibitem[GJ]{GJ} P. Goerss and R. Jardine, \textit{Simplicial homotopy theory}, Progress in Mathematics, \textbf{174}, Birkhäuser Verlag, Basel (1999).
  \bibitem[Mo1]{Mo1} F. Morel, \textit{The stable $\mathbb{A}^1$-connectivity theorems}, K-theory, \textbf{35}(1-2) (2005) 1-68.
  \bibitem[Mo2]{Mo2} F. Morel, \textit{$\mathbb{A}^1$-algebraic topology over a field}, Lecture Notes in Math., \textbf{2052}, Springer, Heidelberg (2012).
  \bibitem[MV]{MV} F. Morel and V. Voevodsky, \textit{$\mathbb{A}^1$-homotopy theory of schemes}, Publ. Math. IHES, \textbf{90} (1999) 45-143.
  \bibitem[Ni]{Nis} Y. Nisnevich, \textit{The completely decomposed topology on schemes and associated descent spectral sequence in algebraic K-theory}, Algebraic K-theory: connections with geometry and topology (Lake Louise, AB, 1987), NATO Adv. Sci. Inst. Ser. C Math. Phys. Sci., \textbf{279}, Kluwer Acad. Publ., Dordrecht (1989) 241-342.
\end{thebibliography}
\end{document}